\newtheorem{theorem}{Theorem}[section]
\newtheorem{lemma}[theorem]{Lemma}
\newtheorem{proposition}[theorem]{Proposition}
\newtheorem{example}[theorem]{Example}
\newtheorem{corollary}[theorem]{Corollary}
\newtheorem*{conjecture}{Conjecture}
\newtheorem{hyp}[theorem]{Hypothesis}
\theoremstyle{definition}
\newtheorem*{remark}{Remark}
\newtheorem{definition}[theorem]{Definition}
\begin{document}
\title{Bounded gaps between primes in special sequences}
\author{Lynn Chua}
\address{Department of Mathematics, Massachusetts Institute of Technology, 3 Ames St., Cambridge, MA, 02139}
\email{chualynn@mit.edu}
\author{ Soohyun Park}
\address{Department of Mathematics, Massachusetts Institute of Technology, 3 Ames St., Cambridge, MA, 02139}
\email{soopark@mit.edu}
\author {Geoffrey D. Smith}
\address{Department of Mathematics, Yale University, 10 Hillhouse Avenue New Haven, Connecticut, 06511}
\email{geoffrey.smith@yale.edu}
\begin{abstract}

We use Maynard's methods to show that there are bounded gaps between primes in the sequence $\{\lfloor n\alpha\rfloor\}$, where $\alpha$ is an irrational number of finite type. In addition, given a superlinear function $f$ satisfying some properties described by Leitmann, we show that for all $m$ there are infinitely many bounded intervals  containing $m$ primes and at least one integer of the form $\lfloor f(q)\rfloor$ with $q$ a positive integer.
\end{abstract}
\maketitle
\section{Introduction and statement of results} \label{sectintro}

The famous Twin Prime Conjecture states that there are infinitely many pairs of primes of the form $p, p + 2$. A natural generalization of this problem concerns finding $k$-tuples of primes generated by linear forms. We first define a necessary condition for such linear forms.

Let $\mathcal{L}=\{L_1,\ldots,L_k\}$ be a finite set of integer-valued linear functions $L_i(n) = a_i n+b_i$. We say $\mathcal{L}$ is \emph{admissible} if for any prime $p$ there is an integer $n$ such that  $p$ does not divide $\prod_{1\leq i\leq k} L_i(n)$. The analogue for the Twin Prime Conjecture for $k$-element sets of linear forms is as follows.

\begin{conjecture}[Prime $k$-tuples Conjecture] \label{k-tup}
If $\mathcal{L}=\{L_1,\ldots,L_k\}$ is admissible, then there are infinitely many integers $n$ such that $L_i(n)$ is prime for all $1\leq i\leq k$.
\end{conjecture}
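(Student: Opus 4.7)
The plan is to set up a Maynard–Tao-style weighted sieve tuned to detect simultaneous primality of all $k$ forms rather than only one of them, and then push the underlying distribution estimates all the way. Concretely, I would work with Selberg-type weights
\[
w_n \;=\; \Bigl(\sum_{\substack{d_1,\dots,d_k\\ d_i \mid L_i(n)}} \lambda_{d_1,\dots,d_k}\Bigr)^{\!2},
\]
supported on pairwise coprime squarefree tuples of size at most some level $R = N^{\theta}$ and parametrised via a smooth cutoff $F:[0,1]^k \to \mathbb{R}_{\geq 0}$. The target is to show
\[
S \;:=\; \sum_{N\leq n < 2N} \Bigl(\prod_{i=1}^k \mathbf{1}_{\mathbb P}(L_i(n))\Bigr) w_n \;>\; 0
\]
for all sufficiently large $N$, since any single positive term forces at least one $n$ in the range for which every $L_i(n)$ is prime, and iterating over dyadic windows then yields infinitely many such $n$.

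Expanding the square and opening the prime indicators reduces the problem to controlling the full $k$-fold prime correlation $\sum_{n \leq N} \prod_{i=1}^k \mathbf{1}_{\mathbb P}(L_i(n))$ along arithmetic progressions to squarefree moduli $d \leq R$. Admissibility of $\mathcal L$ furnishes a nonzero local singular series
\[
\mathfrak S(\mathcal L) \;=\; \prod_p \Bigl(1-\tfrac{\nu_p(\mathcal L)}{p}\Bigr)\Bigl(1-\tfrac{1}{p}\Bigr)^{-k} \;>\; 0,
\]
with $\nu_p(\mathcal L) = \#\{n \bmod p : p \mid \prod_i L_i(n)\}$, so the main term one hopes to isolate has the heuristic shape $\mathfrak S(\mathcal L)\, N/(\log N)^k$. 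If this asymptotic could be established on average over $d \leq R$ for any fixed $\theta > 0$, then optimising $F$ via the Maynard–Tao variational machinery would deliver $S > 0$ and hence the conjecture.

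The main obstacle is precisely this last step, and it is insurmountable with current technology. Existing distribution results — Bombieri–Vinogradov and its Bombieri–Friedlander–Iwaniec extensions, the Zhang/Polymath8 enlargements, and even the full Elliott–Halberstam conjecture — control the distribution of primes in progressions only for a single linear form ($k=1$). For $k \geq 2$, an unconditional asymptotic $\sum_{n \leq N}\prod_i \mathbf{1}_{\mathbb P}(L_i(n)) \sim \mathfrak S(\mathcal L)\, N/(\log N)^k$ is logically equivalent to (indeed, a quantitative refinement of) the Prime $k$-tuples Conjecture itself, so the plan is genuinely circular at this point and reflects the fact that the statement is a central open problem in analytic number theory. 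Consequently, the most one can honestly promise from this route is to replace the product $\prod_i \mathbf{1}_{\mathbb P}(L_i(n))$ by the linear test $\sum_i \mathbf{1}_{\mathbb P}(L_i(n)) - (m-1)$, at which point the $k$-fold correlation collapses to $k$ one-dimensional correlations handled by Bombieri–Vinogradov, and Maynard's optimisation yields $m \ll \log k$ simultaneous primes among the $L_i(n)$ — strictly weaker than the conjecture as stated, but the sharpest conclusion accessible by present means.
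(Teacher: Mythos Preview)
The statement you were asked to prove is the Prime $k$-tuples Conjecture, which the paper states as a \emph{conjecture}, not a theorem; the paper contains no proof of it, nor does any paper, since it remains a central open problem in analytic number theory. So there is no ``paper's own proof'' against which to compare your attempt.

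Your proposal is honest about this: you correctly identify that the key step --- controlling the $k$-fold prime correlation $\sum_{n\le N}\prod_{i=1}^k \mathbf{1}_{\mathbb{P}}(L_i(n))$ on average over moduli --- is logically equivalent to the conjecture itself, so the argument is circular. Your concluding paragraph, explaining that replacing the product by the linear test $\sum_i \mathbf{1}_{\mathbb{P}}(L_i(n)) - (m-1)$ collapses the problem to one-dimensional correlations handled by Bombieri--Vinogradov and yields only $m\ll \log k$ simultaneous primes, is exactly right and is precisely the Maynard--Tao result the paper builds on. In short, you have not proved the conjecture, but you have accurately diagnosed why no one can with current methods, and that is the correct answer here.
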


There has been recent success in demonstrating weakened forms of the Prime $k$-tuples Conjecture, in which \emph{several} of the $L_i(n)$, rather than all, are required to be prime. These results depended on obtaining tight estimates on the distribution of primes in arithmetic progressions. If we denote the $n^{\text{th}}$ prime number by $p_n$, then Goldston, Pintz, and Y{\i}ld{\i}r{\i}m \cite{GPY09} showed that 
\begin{equation} \liminf\limits_{n\rightarrow \infty} \frac{p_{n+1}-p_n}{\log p_n} = 0\,, \end{equation}
using what we now call the GPY method. Their proof depends heavily on the distribution of primes in arithmetic progressions. In particular, they showed that it is possible to obtain bounded gaps between primes by assuming a stronger version of the Bombieri-Vinogradov Theorem, a result on the distribution of primes in arithmetic progressions. A variant of the Bombieri-Vinogradov Theorem, leading to a refined version of the GPY method, was used by Zhang in \cite{Zha14} to prove the breakthough result that 
\begin{equation}
 \liminf\limits_{n \rightarrow \infty} (p_{n + 1} - p_n) \le 70\text{ }000\text{ }000\,, 
 \end{equation} 
Later, Maynard  \cite{May13} showed that 
\begin{equation} \liminf\limits_{n \rightarrow \infty} (p_{n + 1} - p_n) \le 600\,, \end{equation} 
by using an improvement on the GPY method which involved choosing more general sieve weights. In addition, he shows that 
\begin{equation} \liminf\limits_n (p_{n+m}-p_n)\ll m^3 e^{4m}\,,\end{equation}
a result was which also proven independently by Tao. The Polymath8b project \cite{P8B} has subsequently improved these bounds.% bounds to $\liminf\limits_{n \rightarrow \infty} (p_{n + 1} - p_n) \le 246$ and $\liminf\limits_n (p_{n+m}-p_n)\ll m e^{\left(4-\frac{52}{283}\right)m}$ . 

Previous work by Thorner has shown the existence of bounded gaps between primes in Chebotarev sets \cite{Tho14}. Recently, Maynard \cite{May14} proved a generalization of his previous work which showed that any subset of the primes which is ``well-distributed'' in arithmetic progressions contains many primes which are ``close together". The distribution conditions he assumes for his main results are outlined in Hypothesis \ref{Hypothesis1} of Section \ref{sectprelim}. We adapt his methods to show the existence of bounded gaps in certain subsequences of the primes. 

One such set consists of the set of {\em Beatty primes}, which are primes contained in \emph{Beatty sequences} of irrational numbers of \emph{finite type}. Let $\mathbb{Z}^+$ denote the positive integers, and let $\mathbb{P}$ denote the set of primes. Let $\lfloor x \rfloor$ denote the largest integer less than or equal to $x$ and let $\langle x \rangle$ denote the minimum distance from $x$ to an integer. A Beatty sequence is a sequence of the form $\{\lfloor\alpha n \rfloor\}_{n \ge 1}$, where $\alpha$ is an irrational number. The irrational number $\alpha$ is of finite type if 
\begin{equation} \tau=\sup\{r: \liminf_{n\in \mathbb{Z}^+ } n^r \langle n\alpha\rangle=0\} \end{equation}
is finite. For example, all real algebraic numbers and $\pi$ are of finite type. 

We have the following result on Beatty sequences.
\begin{theorem} \label{BeattyBound}
Let $\alpha$ be an irrational number of finite type, and fix $m\in\mathbb{Z}^+$.  There exist constants $\Delta_{\alpha,m} \in\mathbb{Z}^+$ and $B > 0$ depending only on $\alpha$ and $m$ such that the bound
$$
 \#\{ x\leq n< 2x:\, \mbox{there exist m distinct primes of the form $\lfloor \alpha r \rfloor$}, r\in[n, n + \Delta_{\alpha,m}]\} \gg \frac{x}{(\log x)^B}$$
 holds.
\end{theorem}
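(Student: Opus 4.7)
The plan is to reduce Theorem \ref{BeattyBound} to an application of Maynard's generalization (the main theorem of \cite{May14}), by verifying that the set of Beatty primes $\mathcal{A}\cap\mathbb{P}$, where $\mathcal{A}=\{\lfloor\alpha r\rfloor:r\in\mathbb{Z}^+\}$, is ``well-distributed in arithmetic progressions'' in the sense of Hypothesis \ref{Hypothesis1}. Concretely, one must control, on average over moduli $q$ up to some power of $x$, the count
\[
\#\{n\le x:\, n\in\mathcal{A},\ n\in\mathbb{P},\ n\equiv a\pmod q\}
\]
and show it agrees with the expected main term (proportional to $1/\alpha$ times the usual prime-in-AP density) up to an admissible error. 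Once this is set up, the sieve machinery of \cite{May14} then delivers an interval of length $\Delta_{\alpha,m}$ containing $m$ primes of the required form for a positive-proportion set of $n\in[x,2x)$.

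The first step is to encode the Beatty condition analytically. For $\alpha>1$ one has the identity
\[
\mathbf{1}_{\mathcal{A}}(n)=\lfloor -n/\alpha\rfloor-\lfloor -(n+1)/\alpha\rfloor,
\]
so that $\mathbf{1}_{\mathcal{A}}(n)=\psi(-n/\alpha)-\psi(-(n+1)/\alpha)+1/\alpha$, where $\psi(t)=t-\lfloor t\rfloor-1/2$ is the sawtooth function. I would then replace $\psi$ by a truncated Fourier expansion via the standard Vaaler (or Erd\H os--Tur\'an) approximation, reducing everything to bounds on exponential sums $\sum_{n\le x,\,n\equiv a(q)}\Lambda(n)\,e(hn/\alpha)$ for $1\le |h|\le H$ with $H$ a suitable power of $\log x$.

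Next, for these twisted prime sums one combines the Bombieri--Vinogradov theorem (for the $h=0$ main term, which supplies the Hypothesis \ref{Hypothesis1} main term with density constant $\vartheta=1/\alpha$) with exponential-sum estimates of Vinogradov/Vaughan type for the $h\ne 0$ contributions. Here is where the finite-type hypothesis enters critically: it guarantees an effective lower bound $\langle h/\alpha\rangle\gg |h|^{-\tau-\varepsilon}$, which keeps $h/\alpha$ away from rationals with small denominators and thus lets Vaughan's identity produce a power-saving bound in $x$, uniformly in $h\le H$ and $q\le x^{\theta}$ for some $\theta>0$. Summing the resulting bounds over $h$ and $q$ via a standard dyadic decomposition, the truncation error from the Vaaler approximation and the minor-arc error both fit into the error budget allowed by Hypothesis \ref{Hypothesis1}.

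The main obstacle, and where most care is needed, is the joint uniformity: one must keep the exponential-sum bound valid simultaneously over all $h\le H$, all residues $a$ coprime to $q$, and all $q$ up to $x^{\theta}$, while retaining a power saving strong enough to offset $H$ and $q$ after summation. The finite-type condition is exactly what is required to push this through, but optimizing the level-of-distribution exponent $\theta$ (which in turn determines how large $k$ may be taken in Maynard's sieve, and hence how $\Delta_{\alpha,m}$ depends on $m$) is delicate. Once this verification is complete, invoking Maynard's theorem with $\mathcal{L}=\{L_i(n)=n+h_i\}$ for an admissible tuple $\{h_1,\dots,h_k\}$ yields the desired positive-proportion lower bound, with $B$ arising from the sieve and $\Delta_{\alpha,m}=\max_i h_i$.
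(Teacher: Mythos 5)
Your analytic program for the Bombieri--Vinogradov-type input (encoding the Beatty indicator, truncating a Fourier expansion, and bounding twisted prime sums via the finite-type hypothesis) is sound in outline and closely parallels what the paper does; the paper imports this analysis from Banks and Shparlinski \cite{BS09} (their Lemma 4.2 and the method of their Theorem 5.1) rather than re-deriving it from Vaughan's identity, but the content is essentially the same.

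The gap is in the reduction to Maynard's Theorem \ref{thmmaynard}. With your choice $\mathcal{A}=\{\lfloor\alpha r\rfloor: r\in\mathbb{Z}^+\}$, $\mathcal{P}=\mathbb{P}$, and an \emph{arbitrary} admissible tuple $\{h_1,\dots,h_k\}$, the sieve produces many $n\in\mathcal{A}(x)$ for which several of the values $n+h_1,\dots,n+h_k$ are prime. But nothing in this setup forces those primes $n+h_i$ to themselves lie in the Beatty sequence, so you have produced $m$ primes at bounded offsets from a Beatty value, not $m$ primes of the form $\lfloor\alpha r\rfloor$, which is what Theorem \ref{BeattyBound} requires. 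The paper closes this gap with a specific device you would need to add: it takes $\mathcal{A}=\{\lfloor\alpha n\rfloor: \{\alpha n\}<1/2\}$ (Beatty values with \emph{small fractional part}) and chooses the shifts $l_1,\dots,l_k$ to themselves be elements of this restricted $\mathcal{A}$, all lying in a single residue class modulo $W=\prod_{p\le k}p$ so that $\mathcal{L}$ is admissible. With $n=\lfloor\alpha p\rfloor\in\mathcal{A}$ and $l_i=\lfloor\alpha p_i\rfloor\in\mathcal{A}$ one then has $\{\alpha p\}+\{\alpha p_i\}<1$, so $n+l_i=\lfloor\alpha(p+p_i)\rfloor$ is automatically a Beatty value, and the parameters $p+p_i$ are confined to an interval of bounded length $\max_{i,j}|p_i-p_j|$. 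This is exactly what lets the conclusion of Theorem \ref{thmmaynard} be reinterpreted as the conclusion of Theorem \ref{BeattyBound}; the restriction also means the Banks--Shparlinski computation must be carried out for the set $\{\lfloor\alpha n\rfloor:\{\alpha n\}<c\}$ with $c\le 1$ rather than the full Beatty sequence, which is the content of Lemma \ref{PNTBeatty}. Without this fractional-part restriction on both $\mathcal{A}$ and the shifts, the final step of your argument does not yield the theorem.
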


The following corollary is then immediate.

\begin{corollary}\label{ShortBeatty}
Let $\alpha>1$ be a fixed irrational number of finite type. For any $m\in \mathbb{Z}^+$, there exists a constant $\Delta_{\alpha,m} \in \mathbb{Z}^+$ such that for infinitely many $n$ there are $m$ distinct primes of the form $\lfloor \alpha q\rfloor$ with $n\leq q \leq n+\Delta_{\alpha,m}$.
\end{corollary}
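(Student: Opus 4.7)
The plan is to deduce Corollary \ref{ShortBeatty} immediately from Theorem \ref{BeattyBound}, using the same constant $\Delta_{\alpha,m}$. First I would let $N(x)$ denote the number of integers $n\in[x,2x)$ for which $[n, n+\Delta_{\alpha,m}]$ contains $m$ distinct integers $r$ with $\lfloor \alpha r\rfloor$ prime. Theorem \ref{BeattyBound} asserts that $N(x) \gg x/(\log x)^B$ for some $B > 0$ depending only on $\alpha$ and $m$, and in particular $N(x) \to \infty$ as $x \to \infty$.

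Next, applying this to the dyadic sequence $x_j = 2^j$, I note that the intervals $[2^j, 2^{j+1})$ are pairwise disjoint. For every $j$ large enough that $N(2^j) \ge 1$ one may therefore select an integer $n_j \in [2^j, 2^{j+1})$ enjoying the desired property, producing an infinite strictly increasing sequence $n_1 < n_2 < \cdots$ of positive integers, each of which satisfies the conclusion of the corollary with the same $\Delta_{\alpha,m}$.

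The only minor issue worth remarking on is the additional hypothesis $\alpha > 1$ appearing in the corollary but not in the theorem. This assumption simply ensures that the Beatty sequence $\{\lfloor \alpha n\rfloor\}_{n \ge 1}$ is strictly increasing and consists of positive integers, so that counting ``$m$ distinct primes of the form $\lfloor\alpha q\rfloor$'' matches the counting used in Theorem \ref{BeattyBound}. Since the deduction is purely a density-to-infinitude argument, there is no substantive obstacle; all of the work has already been done in establishing Theorem \ref{BeattyBound}, and the corollary is merely a qualitative weakening of its quantitative content.
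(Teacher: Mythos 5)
Your argument is correct and is exactly the routine ``density implies infinitude'' deduction that the paper leaves implicit when it says the corollary is immediate: the count in Theorem \ref{BeattyBound} tends to infinity, so picking one witness $n$ from each dyadic interval $[2^j, 2^{j+1})$ gives infinitely many such $n$ with the same $\Delta_{\alpha,m}$. Your remark about the extraneous hypothesis $\alpha>1$ is also reasonable; it plays no role in the deduction.
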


%\begin{remark}
%Results of Khinchinte \cite{Khi26} and Roth \cite{Rot55,Rot55b} show that $\alpha$ is of finite type 1 for a comeager set of irrational numbers (in the sense of Lebesgue measure) and all irrational algebraic numbers, respectively. As a result, Theorem \ref{BeattyBound} in particular applies to almost all real numbers and all algebraic numbers.
%\end{remark}
%\begin{remark}
%Our proof of this theorem does not give an effective bound on $\Delta$. In particular, the bound varies depending on the potentially poorly-behaved distribution of the fractional parts of small integer multiples of $\alpha$.  
%\end{remark}
%
%
%Our proof of this result will depend on Beatty sequences being well-distributed modulo 1 and an analogue of the Bombieri-Vinogradov Theorem that holds for Beatty primes, namely Proposition \ref{BeattyHyp2}. However, it will not depend on the positive density of a Beatty sequence in the set of integers. 
We prove a similar result for a broad family of sequences $\{\lfloor f(n)\rfloor\}$, for well-behaved differentiable functions $f$ studied by Leitmann \cite{Lei77}. We will call these functions \emph{Leitmann} functions, and we will call primes $p=\lfloor f(n)\rfloor$ {\em Leitmann primes}. One property of these functions is that they grow more quickly than $n$ (see Section 2.2). Some examples of Leitmann functions include $f(n)=n^\Gamma$ for some $1<\Gamma<12/11$,  $f(n) = n \log \log n$ and $f(n) = n \log^C n$ for some $C > 0$ \cite{Lei77}. 

% insert vague description of Leitmann functions
% need small due to certain technical conditions related to translating satisfying Maynard's density conditions to proving the existence of bounded gaps (i.e. Maynard's main result requires additional technical conditions)

\begin{theorem}\label{LeitmannBound}
Fix $m\in\mathbb{Z}^+$, let $f(n)$ be a Leitmann function and set $g(y):=f^{-1}(y)$. Let $\mathcal{L} = \{L_1,\ldots,L_k\}$ be an admissible set of linear forms $L_i(n)=n+l_i$. There exists a positive constant $C$  depending only on $f$ such that if $k>e^{Cm}$ we have
\begin{eqnarray*} \#\{ x\leq f(n) <2x : \, \#(\{L_1(f(n)), \ldots, L_k(f(n))\} \cap \mathbb{P}) \ge m \} 
\gg \frac{g(2x)-g(x)}{(\log x)^k\exp(Ck)} \,,\end{eqnarray*}
where the implied constant depends only on $f$.
\end{theorem}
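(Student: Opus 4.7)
The plan is to apply Maynard's general theorem from \cite{May14} to the set $\mathcal{A} = \{\lfloor f(n)\rfloor : n \in \mathbb{Z}^+\}$ of Leitmann integers, after verifying that this sequence satisfies Hypothesis \ref{Hypothesis1}. Because each $L_i(y)=y+l_i$ acts as an integer translate, producing at least $m$ primes among $\lfloor f(n)\rfloor+l_1,\ldots,\lfloor f(n)\rfloor+l_k$ is exactly the output of Maynard's machinery once the hypothesis is in force for $\mathcal{A}$.

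To verify Hypothesis \ref{Hypothesis1}, I would first observe that $|\mathcal{A}\cap[x,2x]|=g(2x)-g(x)+O(1)$, supplying the main-term density for the sequence. The substantive input is the distribution of primes inside $\mathcal{A}$ in arithmetic progressions, which is the subject of Leitmann's paper \cite{Lei77}. For moduli $q$ in a Siegel--Walfisz range, Leitmann's results yield that the count of primes of the form $\lfloor f(n)\rfloor$ in a reduced residue class $a\pmod q$ is asymptotic to $(g(2x)-g(x))/(\phi(q)\log x)$; more importantly, he establishes a Bombieri--Vinogradov-type bound that controls the error on average over moduli $q\le x^\theta$ for some $\theta>0$ depending only on $f$. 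Together these supply both the main-term asymptotic and the level-of-distribution input demanded by Hypothesis \ref{Hypothesis1}.

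With the hypothesis verified, I would invoke Maynard's theorem. For $k>\exp(Cm)$, where $C$ is the absolute constant coming from \cite{May14}, his sieve produces a positive proportion of $n\in[g(x),g(2x)]$ for which at least $m$ of the quantities $\lfloor f(n)\rfloor+l_i$ are simultaneously prime, with the quantitative lower bound taking the shape $|\mathcal{A}\cap[x,2x]|/((\log x)^k\exp(Ck))$. Inserting $|\mathcal{A}\cap[x,2x]|=g(2x)-g(x)+O(1)$ gives exactly the inequality claimed in the theorem, with the implied constant depending only on $f$ through the parameters in Leitmann's theorem.

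The main obstacle is the level-of-distribution portion of the hypothesis, since Maynard's sieve collapses without a nontrivial Bombieri--Vinogradov estimate for the sequence in question. This is precisely the content of Leitmann's main theorem, proved via Vaughan's identity combined with exponential-sum estimates that crucially use the growth constraints defining a Leitmann function (notably the bounds on higher derivatives of $f$ and of $g$). Once this input is granted, what remains is essentially a translation between Maynard's sieve-theoretic notation and the explicit density $g(2x)-g(x)$ in our statement; no additional number-theoretic ideas are required beyond book-keeping and verifying that the singular-series factor from the admissible tuple $\mathcal{L}$ is absorbed into the implied constant.
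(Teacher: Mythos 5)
There is a genuine gap: you verify Part \ref{hyp2} of Hypothesis \ref{Hypothesis1} (via Leitmann's Bombieri--Vinogradov theorem for primes $\lfloor f(n)\rfloor$) and you note the density $\#\mathcal{A}(x)\sim g(2x)-g(x)$, but you never address Parts \ref{hyp1} and \ref{hyp3}, which concern the distribution of the \emph{set} $\mathcal{A}=\{\lfloor f(n)\rfloor\}$ itself across residue classes. Knowing $\#\mathcal{A}(x)$ does not tell you that $\#\mathcal{A}(x;q,a)\approx\#\mathcal{A}(x)/q$ uniformly over $q\le x^\theta$, nor does Leitmann's theorem, which only concerns the primes in $\mathcal{A}$. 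Asserting that the count of $\mathcal{A}$ ``supplies both the main-term asymptotic and the level-of-distribution input'' conflates two different things; the level-of-distribution clause of Hypothesis \ref{Hypothesis1} has two separate conditions (\ref{hyp1} for $\mathcal{A}$ and \ref{hyp2} for $\mathcal{P}_{L,\mathcal{A}}$), and your argument addresses only the second.

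The paper fills this gap with a nontrivial argument of its own: Proposition \ref{LeitmannDisc} bounds the discrepancy of the sequence $\{f(n)/q\}$ uniformly in $q\ll N^\theta$ by combining the Erd\H{o}s--Tur\'an inequality (Theorem \ref{ETT}) with the van der Corput-style exponential-sum bound (Theorem \ref{2.7}), exploiting the second-derivative conditions in Definition \ref{defleitmanntype}, including a separate case analysis on whether $\alpha_2>0$ or $\alpha_2=0$. Corollary \ref{LeitmannHyp13} then translates this into Parts \ref{hyp1} and \ref{hyp3}. This is where the growth constraint $f(x)\ll x^{12/11-\epsilon_1}$ is actually used (to get a usable exponent $\theta=1/11$), and it is not book-keeping. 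A secondary but smaller imprecision: you call $C$ ``the absolute constant coming from \cite{May14},'' but in Maynard's Theorem \ref{thmmaynard} the constant depends on $\theta$, which here depends on $f$ via both Leitmann's theorem and the discrepancy bound, which is why the paper states $C$ depends on $f$.
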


\begin{corollary}\label{ShortLeitmann}
Let $f(n)$ be a Leitmann function. For any positive integer $m$, there exists a $\Delta_{f,m} \in \mathbb{Z}^+$ such that there %exists infinitely many $n$ such that $\lfloor f(n)\rfloor $ is prime with $k$ distinct primes in the interval $(\lfloor f(n)\rfloor, \lfloor f(n)\rfloor + \Delta_{f,k}]$.
exist infinitely many $n$ such that the interval $[n,n+\Delta_{f,m}]$ contains $m$ primes and an integer of the form $\lfloor f(q)\rfloor$ for some $q\in \mathbb{Z}^+$.
\end{corollary}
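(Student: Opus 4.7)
The plan is to extract Corollary \ref{ShortLeitmann} as a short consequence of Theorem \ref{LeitmannBound}. Given $m\in\mathbb{Z}^+$, I would let $C$ be the constant furnished by that theorem and fix any integer $k>e^{Cm}$. I would then choose an admissible $k$-tuple $\mathcal{L}=\{L_1,\ldots,L_k\}$ with $L_i(n)=n+l_i$, translated so that $0=l_1<l_2<\cdots<l_k$; such a tuple exists---for example, take $\{l_i\}$ to be $0$ together with the first $k-1$ primes exceeding $k$, since these cannot cover every residue class modulo any prime. Finally, set $\Delta_{f,m}:=l_k$.

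I would then apply Theorem \ref{LeitmannBound} with this $\mathcal{L}$. The theorem produces, for every sufficiently large $x$, a set of positive integers $r$ with $x\leq f(r)<2x$, of size
$$\gg \frac{g(2x)-g(x)}{(\log x)^k\exp(Ck)},$$
such that at least $m$ of the shifted values $\lfloor f(r)\rfloor+l_i$ are prime (reading $L_i(f(r))$ as $L_i(\lfloor f(r)\rfloor)$ in line with the definition of Leitmann primes). For any such $r$, setting $n:=\lfloor f(r)\rfloor$, the integer $n=\lfloor f(q)\rfloor$ with $q=r$ lies in the interval $[n,n+\Delta_{f,m}]$, alongside the $m$ primes $n+l_{i_j}$. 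This is exactly what Corollary \ref{ShortLeitmann} requires, and by taking $x\to\infty$ we may force $n$ to be arbitrarily large, producing infinitely many suitable intervals.

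The only point warranting a short check is that the lower bound above really diverges as $x\to\infty$. With $m$ fixed, $k$ and $C$ are constants, so the denominator is merely polylogarithmic, $O((\log x)^k)$. For each of the Leitmann profiles highlighted in the paper---$f(y)=y^\Gamma$ with $1<\Gamma<12/11$, $f(y)=y\log\log y$, and $f(y)=y\log^C y$---the inverse $g$ grows like $y^{1/\Gamma}$, $y/\log\log y$, and $y/\log^C y$ respectively, so $g(2x)-g(x)$ eclipses any fixed power of $\log x$. Hence the count tends to infinity with $x$ and infinitely many intervals $[n,n+\Delta_{f,m}]$ of the desired form arise. I anticipate no serious obstacle beyond this routine growth verification.
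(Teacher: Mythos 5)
The paper states Corollary \ref{ShortLeitmann} without proof, treating it as an immediate consequence of Theorem \ref{LeitmannBound}, and your derivation follows the intended route: read $L_i(f(r))$ as $L_i(\lfloor f(r)\rfloor)$, set $\Delta_{f,m}$ to be the diameter of the tuple, and observe that $n=\lfloor f(r)\rfloor$ lies at the left end of $[n,n+\Delta_{f,m}]$ together with the $m$ primes $n+l_{i_j}$.

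However, the specific admissible $k$-tuple you propose is not admissible. Taking $\{l_i\}$ to be $0$ together with the first $k-1$ primes exceeding $k$ fails already at $p=2$: the shift $0$ is even while all the remaining shifts are odd, so the tuple occupies both residue classes modulo $2$, and for every $n$ one of the $n+l_i$ is even. The standard fix is to drop the $0$ and take $\{l_1,\ldots,l_k\}$ to be the first $k$ primes exceeding $k$: this is admissible, since for $p\le k$ every $l_i$ is coprime to $p$ and hence the class $0\pmod p$ is missed, while for $p>k$ there are too few shifts to cover all classes. If you want $l_1=0$ you may translate this tuple by $-l_1$ (admissibility is translation-invariant), but in fact no translation is needed---$n=\lfloor f(r)\rfloor$ sits at the left endpoint of $[n,n+l_k]$ regardless of whether $0$ is among the shifts, so $\Delta_{f,m}=l_k$ still works with the untranslated tuple.

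Your growth check is carried out only for three sample Leitmann functions; for a general Leitmann $f$ it follows directly from Definition \ref{defleitmanntype}. Since $f(x)\ll x^{12/11-\epsilon_1}$ for some $\epsilon_1>0$, the inverse satisfies $g(y)\gg y^{11/12+\epsilon'}$ for some $\epsilon'>0$; and since $g'(t)=1/f'(g(t))\sim g(t)/(\alpha_1 t)$, one gets
$$
g(2x)-g(x)=\int_x^{2x}g'(t)\,dt\gg \frac{g(x)}{\alpha_1}\gg x^{11/12},
$$
which dominates any fixed power of $\log x$. So the right-hand side of Theorem \ref{LeitmannBound} tends to infinity with $x$, producing infinitely many suitable $n$ as you claim.
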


\begin{remark}
In Theorem \ref{LeitmannBound}, the primes are not necessarily of the form $\lfloor f(m)\rfloor$ for $m\in\mathbb{N}$, in contrast to Theorem \ref{BeattyBound}. The weaker result in this case is a consequence of the fact that $\lim\limits_{n\rightarrow \infty} \frac{f(n)}{n} = \infty$, as compared to Beatty sequences.
\end{remark}
\begin{remark}
The constants $\Delta_{\alpha,m}$, $\Delta_{f,m}$, $B$, and $C$ above are effectively computable given a choice of $f$ or $\alpha$, but we do not evaluate them explicitly in this paper.
\end{remark}

This paper is organized as follows. In Section \ref{sectprelim}, we give a more detailed overview of Maynard's methods, followed by an overview of Leitmann functions and results on uniform distribution of sequences. In Section \ref{sectbeatty}, we prove results on Beatty sequences in order to prove Theorem \ref{BeattyBound}. In Section \ref{sectleitmann}, we use analogous considerations to prove Theorem \ref{LeitmannBound}.

\section{Preliminaries} \label{sectprelim}
We now give an overview of Maynard's methods \cite{May14} and how it applies to our work, followed by definitions and properties of Leitmann functions and related results. In what follows, we use $\{x\} := x - \lfloor x\rfloor$ to denote the {\em fractional part} of $x$.

\subsection{Overview of Maynard's methodology}
We first define a term relating to the distribution of primes in arithmetic progressions. Let $\pi(x; q, a)$ be the number of primes $\le x$ which are congruent to $a$ (mod $q$).

\begin{definition}The primes are said to have a {\em level of distribution} $\theta$ for some $\theta > 0$ if
\begin{equation*}
\sum_{q \le x^\theta} \max_{(a, q) = 1} \left|\pi(x; q, a) - \frac{\pi(x)}{\phi(q)}\right| \ll_{A} \frac{x}{(\log x)^A}\,
\end{equation*} 
for every $A > 0$. \end{definition}
The following theorem is a deep and celebrated theorem in analytic number theory.
\begin{theorem}[Bombieri-Vinogradov Theorem, \cite{Dav}]
The primes have a level of distribution $\theta$ for any $0<\theta <\frac{1}{2}$. \end{theorem}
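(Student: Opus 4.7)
The plan is to follow the classical route via the large sieve inequality for Dirichlet characters combined with a combinatorial identity decomposing the von Mangoldt function. First I would pass from $\pi(x;q,a)$ to $\psi(x;q,a) = \sum_{n \le x,\, n \equiv a (q)} \Lambda(n)$ by partial summation, since it is technically cleaner to work with $\Lambda$. Then, using orthogonality of Dirichlet characters, I would write
\begin{equation*}
\psi(x;q,a) - \frac{\psi(x)}{\phi(q)} = \frac{1}{\phi(q)} \sum_{\chi \ne \chi_0 \bmod q} \overline{\chi}(a) \, \psi(x,\chi),
\end{equation*}
where $\psi(x,\chi) = \sum_{n \le x} \Lambda(n)\chi(n)$. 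The maximum over $(a,q)=1$ is then absorbed into a sum over non-principal characters, so bounding the left-hand side of the Bombieri--Vinogradov estimate reduces to controlling $\sum_{q \le x^\theta} \frac{1}{\phi(q)} \sum_{\chi}^{*} |\psi(x,\chi)|$, where the inner sum is over primitive characters modulo $q$ (after factoring out induced characters and paying only a logarithmic price).

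Next I would invoke Vaughan's identity to split $\Lambda(n) = \Lambda_1(n) + \Lambda_2(n) + \Lambda_3(n) + \Lambda_4(n)$, where the $\Lambda_i$ are Dirichlet convolutions of truncated pieces of $\mu$, $1$, $\log$, and $\Lambda$, with parameters $U$ and $V$ tuned so that $UV \le x$ and $U, V \approx x^{1/3}$. The resulting sums $\sum_n \Lambda_i(n) \chi(n)$ fall into two shapes: \emph{Type I} sums $\sum_{m \le M} a_m \sum_{n \le x/m} \chi(mn)$ with $a_m$ well-controlled (essentially $\mu$ or $\log$), and \emph{Type II} bilinear sums $\sum_{m \sim M} \sum_{n \sim N} a_m b_n \chi(mn)$ with $MN \asymp x$ and both $M, N$ in a medium range. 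Type I sums I would handle by Polya--Vinogradov style estimates or direct character-sum bounds after summing the inner variable.

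The crux is the Type II sums, which is where the \emph{large sieve} enters. The Montgomery--Vaughan large sieve inequality asserts
\begin{equation*}
\sum_{q \le Q} \frac{q}{\phi(q)} \sum_{\chi \bmod q}^{*} \Bigl|\sum_{N < n \le 2N} c_n \chi(n)\Bigr|^2 \ll (Q^2 + N) \sum_n |c_n|^2.
\end{equation*}
Applying Cauchy--Schwarz to the bilinear sum and then two large sieves (one in $m$, one in $n$) yields a bound of the form $(Q^2 + M)^{1/2}(Q^2+N)^{1/2}(MN)^{1/2}\log^{O(1)} x$, which, when summed over dyadic ranges and combined with $Q = x^\theta$, gives a saving of $(\log x)^{-A}$ provided $\theta < 1/2$. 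For the small-modulus contribution (say $q \le (\log x)^B$) one must quote the Siegel--Walfisz theorem to handle a possible exceptional zero, which is the only ineffective ingredient.

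The main obstacle, and historically the reason this result is so deep, is the tight matching of parameters: the large-sieve bound $Q^2 + N$ only beats the trivial estimate when $Q \le N^{1/2}$, which forces the cutoff $\theta < 1/2$. Pushing beyond this barrier (as in Zhang's and Polymath's work on smooth moduli) requires genuinely new input beyond the large sieve, and our plan makes no attempt in that direction, as the stated range $\theta < 1/2$ suffices for Maynard's framework invoked in Hypothesis~\ref{Hypothesis1}.
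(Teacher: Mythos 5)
The paper does not prove the Bombieri--Vinogradov theorem; it is quoted as a black box from Davenport's \emph{Multiplicative Number Theory}. Your sketch correctly reproduces the standard argument found in that reference: passage from $\pi(x;q,a)$ to $\psi(x;q,a)$, orthogonality of characters and reduction to primitive characters, Vaughan's identity splitting $\Lambda$ into Type~I and Type~II pieces, the Montgomery--Vaughan large sieve applied (via Cauchy--Schwarz) to the bilinear Type~II sums, and Siegel--Walfisz for the small moduli, which is indeed the sole source of ineffectivity. Two minor remarks: the customary parameter choice in Davenport's exposition is $U=V\approx x^{2/5}$ rather than $x^{1/3}$, though this only affects bookkeeping; and the Type~I sums are usually also fed through the large sieve after executing the long inner sum, rather than through P\'olya--Vinogradov, although either can be made to work. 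Your closing diagnosis of the $\theta<1/2$ barrier --- the large sieve loses once $Q > N^{1/2}$ --- is exactly right and is why extending the level of distribution (Fouvry--Iwaniec, Bombieri--Friedlander--Iwaniec, Zhang, Polymath8) requires fundamentally new dispersion and exponential-sum input. As an outline it is sound and consistent with the cited source.
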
 

In Maynard's work on more general subsets of primes, he shows that if we assume a technical hypothesis, which includes a Bombieri-Vinogradov-type result together with other assumptions of uniform distribution in residue classes, then we get a result on bounded gaps between primes. Specifically, let $\mathcal{A}$ be a set of positive integers. Let $\mathcal{P}$ be a set of primes, and let $\mathcal{L}=\{L_1,\ldots,L_k\}$ be a finite admissible set of linear forms of the form $L_i(n)=n+l_i$, $1\leq i\leq k$. We define
\begin{eqnarray}
\mathcal{A}(x) &:=& \{n \in \mathcal{A} : x \le n < 2x \}\,, \\
L(\mathcal{A}) &:=& \{L(n): n \in \mathcal{A}\}  \,,\\
\mathcal{P}_{\mathcal{L}, \mathcal{A}}(x) &:=& L(\mathcal{A}(x)) \cap \mathcal{P}\,, \\
\mathcal{A}(x; q, a) &:=& \{n \in \mathcal{A}(x) : n \equiv a \text{ (mod $q$)}\}\,,\\
\mathcal{P}_{\mathcal{L}, \mathcal{A}}(x; q, a) &:=& L(\mathcal{A}(x; q, a)) \cap \mathcal{P}\,.
\end{eqnarray}

Maynard then considers sets $\mathcal{A},\mathcal{P},\mathcal{L}$ satisfying the following hypothesis for some $\theta>0$.
\begin{hyp}[\cite{May14}, Hypothesis 1]\label{Hypothesis1}
The following conditions hold.
\begin{enumerate}
\item \label{hyp1} $\mathcal{A}$ is well-distributed in arithmetic progressions. That is,
$$ \sum_{q\leq x^\theta} \max_a \left|\#\mathcal{A}(x;q,a) - \frac{\#\mathcal{A}(x)}{q}\right| \ll \frac{\#\mathcal{A}(x)}{(\log x)^{100k^2}}\,. $$

\item \label{hyp2} Primes in $L(\mathcal{A})\cap \mathcal{P}$ are well-distributed in most arithmetic progressions. For any $L\in\mathcal{L}$, we have
$$ \sum_{\substack{q\leq x^\theta }} \max_{(L(a),q) = 1} \left| \#\mathcal{P}_{L,\mathcal{A}}(x;q,a) - \frac{\#\mathcal{P}_{L,\mathcal{A}}(x)}{\phi(q)}\right| \ll \frac{\#\mathcal{P}_{L,\mathcal{A}}(x)}{(\log x)^{100k^2}}\,.$$

\item \label{hyp3} $\mathcal{A}$ is not too concentrated in any arithmetic progression. Namely, for any $q<x^\theta$, we have
$$\mathcal{A}(x;q,a) \ll \frac{\#\mathcal{A}(x)}{q}\,.$$
\end{enumerate}
\end{hyp}

If Hypothesis \ref{Hypothesis1} holds, then Maynard \cite{May14} proves the following.

\begin{theorem}[\cite{May14}, Theorem 3.1]\label{thmmaynard} Let $\alpha>0$ and $0<\theta<1$. There is a constant $C$ depending only on $\theta$ and $\alpha$ such that the following holds.
Let $(\mathcal{A},\mathcal{L},\mathcal{P}, %B, <killed B term>
%x, <don't know what to do with x...>
\theta)$ satisfy Hypothesis \ref{Hypothesis1}, for some set of integers $\mathcal{A}$, set of primes $\mathcal{P}$, and admissible set of linear functions $\mathcal{L}$. %and integer $B\leq x^\alpha$. 
Assume $k=\#\mathcal{L}$ satisfies $C\leq k\leq (\log x)^\alpha$ and the coefficients $a_i,b_i$ of $L_i$ satisfy $a_i,b_i\leq x^\alpha$ for all $1\leq i\leq k$.

If $\delta > (\log k)^{-1}$ is such that 
\begin{equation} \frac{1}{k}%\frac{\phi(B)}{B} 
\sum_{L\in\mathcal{L}} \frac{\phi(a_i)}{a_i} \#\mathcal{P}_{L,\mathcal{A}}(x) \geq \delta \frac{\#\mathcal{A}(x)}{\log x}\,, \end{equation}
then 
\begin{equation}
 \#\{ n\in\mathcal{A}(x)\,:\, \#(\{L_1(n),\ldots,L_k(n)\}\cap\mathcal{P})\geq C^{-1}\delta \log k\} \gg \frac{\#\mathcal{A}(x)}{(\log x)^k \exp(Ck)}\,. 
 \end{equation}
Moreover, if $\mathcal{P}=\mathbb{P}$, $k\leq (\log x)^{1/5}$ and all $L\in\mathcal{L}$ have the form $an+b_i$ with $|b_i| \leq (\log x)k^{-2}$ and $a\ll 1$, then the primes counted above can be restricted to be consecutive, at the cost of replacing $\exp(Ck)$ with $\exp(Ck^5)$ in the bound.
\end{theorem}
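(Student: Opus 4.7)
The plan is to adapt Maynard's multidimensional Selberg-style sieve. Attach to each $n\in\mathcal{A}(x)$ the nonnegative weight
\begin{equation*}
w_n := \left( \sum_{\substack{d_i \mid L_i(n),\, \forall i \\ \prod d_i \le R}} \lambda_{d_1,\ldots,d_k} \right)^{\!2}, \qquad R := x^{\theta/2 - \varepsilon},
\end{equation*}
where the $\lambda$'s are produced from a smooth, compactly supported function $F$ on the simplex $\{(t_1,\ldots,t_k):t_i\ge 0,\ \sum t_i\le 1\}$ via the substitution $t_i = \log d_i / \log R$ followed by Möbius inversion. Form the two sums
\begin{equation*}
S_1 := \sum_{n\in\mathcal{A}(x)} w_n, \qquad S_2 := \sum_{i=1}^k \sum_{n\in\mathcal{A}(x)} \mathbf{1}_{\mathcal{P}}(L_i(n))\, w_n.
\end{equation*}
The pigeonhole observation is that if $S_2 - (m-1)S_1 > 0$, then the number of $n$ with at least $m$ of the $L_i(n)$ in $\mathcal{P}$ is $\ge (S_2-(m-1)S_1)/\max_n w_n$, so the strategy reduces to obtaining a sharp asymptotic comparison of $S_1$ and $S_2$ together with a crude bound on $\max_n w_n$.

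Second, I would evaluate $S_1$ and $S_2$ asymptotically. Expanding the squares and swapping summations reduces each inner sum to counting $n\in\mathcal{A}(x)$ in a residue class modulo $\prod_i [d_i,e_i] \le R^2 < x^{\theta}$. Hypothesis \ref{Hypothesis1}(\ref{hyp1}) permits the replacement of these counts by $\#\mathcal{A}(x)/q$, while (\ref{hyp3}) dominates the residual terms. After passing to the variables $t_i$, one gets $S_1 \sim c_1\, \#\mathcal{A}(x)\,(\log R)^k\, I_k(F)$ with $I_k(F) = \int F^2\, d\mathbf{t}$. The same unfolding applied to $S_2$, now using Hypothesis \ref{Hypothesis1}(\ref{hyp2}) to handle the prime-indicator factor, yields $S_2 \sim c_2\,(\log R)^{k+1}\sum_i \#\mathcal{P}_{L_i,\mathcal{A}}(x)\cdot J_k^{(i)}(F)$, where $J_k^{(i)}(F) = \int \left(\int F\, dt_i\right)^2 \prod_{j\ne i} dt_j$. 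Substituting the hypothesized lower bound on $k^{-1}\sum_L (\phi(a_L)/a_L)\#\mathcal{P}_{L,\mathcal{A}}(x)$ collapses the ratio $S_2/S_1$ to roughly $\delta \theta \cdot M_k(F)$, where $M_k(F) := \sum_i J_k^{(i)}(F)/I_k(F)$.

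The third and hardest step is to exhibit an $F$ with $M_k(F) \gg \log k$; a generic choice only gives $M_k(F)$ bounded, which would produce the GPY-style ``finitely many primes in a tuple'' conclusion rather than the desired logarithmic growth. The natural ansatz is a symmetric factored form $F(\mathbf{t}) = \prod_j g(k t_j)\cdot \mathbf{1}_{\sum t_j\le 1}$ for a well-chosen $g$; the simplex constraint is handled by a smooth truncation so that each $J_k^{(i)}(F)$ factors approximately into one-dimensional integrals, reducing the problem to a single-variable variational calculation yielding $M_k(F) \ge \log k - O(\log\log k)$. Combining this with $S_2 - (m-1)S_1 > 0$ for $m = \lfloor C^{-1}\delta\log k\rfloor$, and bookkeeping on the denominator $\max_n w_n$, gives the claimed lower bound of order $\#\mathcal{A}(x)/\bigl((\log x)^k \exp(Ck)\bigr)$, the $\exp(Ck)$ absorbing multinomial overhead from the $k$-dimensional sieve.

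For the consecutive-primes refinement, I would adjoin to $\mathcal{L}$ auxiliary shifts $L_{k+j}(n)=n+h_j$ ranging over every $h_j$ with $|h_j| \le (\log x)/k^2$ not already an $l_i$, and multiply $w_n$ by a Selberg-type factor forcing each $L_{k+j}(n)$ to be composite. The new sieve operates on a tuple of size polynomial in $k$, and a careful but routine accounting of the resulting combinatorial complexity is what inflates $\exp(Ck)$ to $\exp(Ck^5)$ and forces the restriction $k\le (\log x)^{1/5}$.
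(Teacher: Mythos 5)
This theorem is cited in the paper directly from Maynard as \cite{May14}, Theorem 3.1; the authors do not reprove it, so there is no in-paper proof to compare your sketch against. That said, your outline is a faithful high-level reconstruction of Maynard's actual argument: the multidimensional Selberg weights $w_n$ generated by a smooth $F$ on the simplex via Möbius inversion in $t_i = \log d_i/\log R$; the two sums $S_1, S_2$ and the telescoping comparison $S_2 - (m-1)S_1 > 0$; the use of parts~(\ref{hyp1})--(\ref{hyp3}) of Hypothesis~\ref{Hypothesis1} to control the sieve main terms and error terms for moduli up to $R^2 < x^\theta$; the reduction to the ratio $M_k(F) = \sum_i J_k^{(i)}(F)/I_k(F)$ and the variational choice of $F$ yielding $M_k(F) \geq \log k - O(\log\log k)$; and the auxiliary-shift device for the consecutive-primes refinement.

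Two minor corrections to the bookkeeping. First, in the pigeonhole step the denominator should be of size roughly $k \cdot \max_n w_n$, not $\max_n w_n$: for a good $n$ the nonnegative term $\sum_i \mathbf{1}_{\mathcal{P}}(L_i(n)) - (m-1)$ can be as large as $k$, so the count of good $n$ satisfies $N_m \geq \bigl(S_2-(m-1)S_1\bigr)/\bigl(k\max_n w_n\bigr)$. This changes nothing since the factor $k$ is absorbed into $\exp(Ck)$. Second, the near-optimal $F$ is not literally a tensor product $\prod_j g(kt_j)$ supported on the simplex; one takes a symmetric function of that shape and then must estimate the defect introduced by the truncation $\sum t_j \leq 1$, which is where the $-O(\log\log k)$ correction arises. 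Also note that your sketch specializes to $a_i=1$ (so $\phi(a_i)/a_i=1$), which suffices for the paper's applications but suppresses a density factor present in the general statement. None of these affect the structure of the argument, and your plan correctly captures Maynard's proof.
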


It will follow from Theorem \ref{thmmaynard} that if Hypothesis \ref{Hypothesis1} holds for some positive constant $\theta$ for our particular choices of $\mathcal{A}$, $\mathcal{P}$ and $\mathcal{L}$, then we have a result on bounded gaps between tuples of primes as desired.

\subsection{Leitmann functions}
In this paper, we show results on bounded gaps in sparse subsets of primes. We consider the class of Leitmann functions which grow more quickly than $n$. In Leitmann's work \cite{Lei77}, he proves Bombieri-Vinogradov type results for the following class of functions.

\begin{definition} \label{defleitmanntype} Let $\mathcal{F}$ be the set of {\em Leitmann} functions $f:[c_0,\infty) \rightarrow [2,\infty)$, with $c_0\geq 1$, which have continuous derivatives up to third order such that $f'(x)>0$, $f''(x)>0$ for $x\geq c_0$ and 
\begin{equation}
xf^{(i)}(x) = f^{(i-1)}(x)(\alpha_i+o(1))\,, \end{equation}
for $i=1,2,3$, where $\alpha_1>0$, $\alpha_2\geq 0$, $\alpha_1\neq\alpha_2$, $\alpha_3\neq 3\alpha_1$, $2\alpha_1-3\alpha_2 + \alpha_3\neq 0$. If $\alpha_2 = 0$, then for $x\geq c_0$,
\begin{equation} xf''(x) = f'(x) s(x) t(x)\,, \end{equation}
where $s(x)$ is positive, non-increasing, and tends to zero if $x$ tends to infinity. Moreover, $s^{-1}(x) \ll_\epsilon x^\epsilon$ for each $\epsilon>0$ and $0<c_1\leq t(x) \leq c_2$ for some constants $c_1,c_2$. $f$ also satisfies $x=o(f(x))$ and $f(x)\ll x^{12/11-\epsilon_1}$ for some positive $\epsilon_1$. 
\end{definition}

\begin{example} The following functions are Leitmann:
\begin{equation*}
x^\beta (\log x)^A\,, x^\beta \exp(A(\log x)^\beta)\,, x(\log x)^C\,, x\exp(C(\log x)^B)\,, xl_m(x)\,,
\end{equation*}
where $A$ is real, $1<\beta <12/11$, $0<B<1$, $C>0$, $l_1(x) = \log x$ and $l_{m+1}(x) = \log(l_m(x))$. \end{example}

Leitmann proved the following Bombieri-Vinogradov-type result relating to the distribution of prime numbers of the form $p=\lfloor f(n)\rfloor$ for $n\in\mathbb{N}$. 
\begin{theorem}[\cite{Lei77}, Theorem 1.1] \label{leitmannthm}
Let $f:[c_0,\infty)\rightarrow \mathbb{R}^+$ be Leitmann, let $\pi_f(x;q,a)$ be the number of primes $p\leq x$ such that $p=\lfloor f(n) \rfloor$ for some $n\in\mathbb{N}$, and $p\equiv a$ (mod $q$). Let $x\geq c = f(c_0)$, let $\phi$ be Euler's totient function, and let $g$ be the inverse function of $f$. The following are true.
\begin{enumerate}
\item \label{thmleitmannbound1} If $A>0$, $1\leq q \leq (\log x)^A$, $(a,q) = 1$ and $b$ is some positive constant, then
\begin{equation*} \pi_f(x; q, a) := \sum_{\substack{p\leq x,\,n\in\mathbb{N}\\ p=\lfloor f(n)\rfloor \\ p\equiv a\, (\text{mod } q)}} 1 
= \frac{1}{\phi(q)} \int_c^x \frac{g'(t)}{\log t}\, dt + O\left(g(x) \exp\left\{ -b\sqrt{\log x}\right\}\right)\,. \end{equation*}

\item \label{Leitmann-Vinogradov} For every $A>0$, there exists some $\theta>0$ such that
\begin{equation*}\sum_{q\leq x^{\theta}} \max_{(a,q)=1} \max_{c\leq y\leq x} \left| \pi_f(y;q,a) - \frac{1}{\phi(q)} \int_c^y \frac{g'(t)}{\log t}\, dt\right| \ll \frac{g(x)}{(\log x)^A}\,. \end{equation*}
\end{enumerate}
\end{theorem}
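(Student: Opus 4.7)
The plan is to follow the Piatetski--Shapiro template: detect the constraint ``$p = \lfloor f(n)\rfloor$ for some $n$'' via fractional parts, reduce to exponential sums over primes, prove those estimates uniformly in the modulus, and then insert the classical large sieve to average over $q$ in part \eqref{Leitmann-Vinogradov}.

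Since $f$ is strictly increasing with inverse $g$, and since $g(p)$ is generically non-integral while $g'(y)=1/f'(g(y))\to 0$, a prime $p$ is of the form $\lfloor f(n)\rfloor$ precisely when the half-open interval $[g(p),g(p+1))$ contains an integer, which happens at most once for large $p$. Writing $\psi(x):=x-\lfloor x\rfloor-1/2$, the characteristic function takes the shape
\begin{equation*}
\chi_f(p) \;=\; \bigl(g(p+1)-g(p)\bigr)\;+\;\bigl(\psi(g(p))-\psi(g(p+1))\bigr).
\end{equation*}
Summing $\chi_f(p)\mathbf{1}_{p\equiv a\,(q)}$ over primes $p\leq x$, the smooth piece $g(p+1)-g(p)\approx g'(p)$ contributes, via partial summation against $\pi(t;q,a)$ together with Siegel--Walfisz,
\begin{equation*}
\frac{1}{\phi(q)}\int_c^x \frac{g'(t)}{\log t}\,dt \;+\; O_{A,b}\bigl(g(x)\exp(-b\sqrt{\log x})\bigr)
\end{equation*}
uniformly for $q\leq(\log x)^A$, which already yields part \eqref{thmleitmannbound1} modulo the sawtooth contribution.

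The real work is to bound the sawtooth terms. Here I would invoke Vaaler's trigonometric approximation to $\psi$ of length $H=x^{\eta}$ for some small $\eta>0$, reducing the problem to nontrivial bounds for
\begin{equation*}
S_h(x;q,a)\;:=\;\sum_{\substack{p\leq x\\ p\equiv a\,(q)}} e\bigl(h\,g(p)\bigr),\qquad 1\leq|h|\leq H.
\end{equation*}
Vaughan's identity decomposes $S_h$ into Type I sums of the form $\sum_{d}a_d\sum_m e(hg(dm))$ and Type II sums $\sum_d\sum_m a_d b_m e(hg(dm))$. For Type I, the hypothesis $xf^{(i)}(x)=(\alpha_i+o(1))f^{(i-1)}(x)$ translates (by implicit differentiation) into matching asymptotics $g^{(i)}(y)\asymp g(y)/y^i$, giving just enough regularity in the phase for van der Corput's $k$th-derivative test to be effective; Type II sums are attacked by Weyl--van der Corput plus Cauchy--Schwarz, where the ceiling $f(x)\ll x^{12/11-\epsilon_1}$ in Definition \ref{defleitmanntype} is precisely what is needed to keep the saving ahead of the density $g(x)/x$.

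Part \eqref{Leitmann-Vinogradov} is then obtained by combining, for each nonzero $h$, the pointwise bound on $S_h$ with the standard large sieve across moduli $q\leq x^\theta$, exactly as in the classical proof of Bombieri--Vinogradov; the smooth main term is handled by applying Bombieri--Vinogradov directly against the weight $g'(t)/\log t$. The principal obstacle, and where essentially all of the analytic effort resides, is establishing the Type II exponential sum bound with a saving of shape $(\log x)^{-A}$ uniform in $q$ over the full range $q\leq x^\theta$. It is in this step that the non-degeneracy conditions $\alpha_1\neq\alpha_2$, $\alpha_3\neq 3\alpha_1$, and $2\alpha_1-3\alpha_2+\alpha_3\neq 0$ appear indispensable: they rule out stationary-phase degeneracies in the oscillatory integrals produced by the repeated $A$- and $B$-processes, and without them the phase $hg(dm)$ can become too flat (or too linear) for van der Corput to extract a power saving.
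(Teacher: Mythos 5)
This statement is quoted, not proved, in the paper: it is reproduced verbatim (up to notation) as Theorem~1.1 of Leitmann \cite{Lei77}, and the authors use it as an imported input rather than establishing it themselves. There is therefore no in-paper proof to compare your attempt against; the only way to check it rigorously would be to consult \cite{Lei77} directly.

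That said, your sketch does capture the expected architecture of a Piatetski--Shapiro-type argument. The reduction of ``$p=\lfloor f(n)\rfloor$'' to counting integers in $[g(p),g(p+1))$, the split into a smooth term $g(p+1)-g(p)$ (handled by partial summation against $\pi(t;q,a)$ plus Siegel--Walfisz for part~\eqref{thmleitmannbound1}) and a sawtooth term in $\psi$, the passage to exponential sums $\sum_{p\leq x,\,p\equiv a\,(q)} e(hg(p))$, the Type~I/II decomposition with van der Corput derivative tests, and the observation that the non-degeneracy conditions $\alpha_1\neq\alpha_2$, $\alpha_3\neq 3\alpha_1$, $2\alpha_1-3\alpha_2+\alpha_3\neq 0$ are there to prevent the phase from becoming too flat: all of this is the correct shape of such a proof. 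Two caveats on the specifics. First, the named technical tools are anachronistic relative to 1977: Vaaler's approximation to $\psi$ dates to 1985 and Vaughan's identity appeared only contemporaneously with Leitmann's paper, so Leitmann would have worked with older Fourier truncations of $\psi$ (Vinogradov's) and a Vinogradov-style combinatorial decomposition, even though the effect is the same. Second, and more substantively, your sketch leaves the central analytic step --- uniform Type~II bounds with $(\log x)^{-A}$ savings over the full range $q\leq x^\theta$ --- entirely as an assertion; that is where the whole difficulty lies, and where the exponent $12/11$ actually gets used, so one cannot call the argument complete without it. As a reconstruction of the strategy your proposal is sound, but it should not be read as a verification of Leitmann's theorem.
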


\subsection{Uniform distribution of sequences modulo 1}
Part \ref{Leitmann-Vinogradov} of Theorem \ref{leitmannthm} is equivalent to condition \ref{hyp2} of Hypothesis \ref{Hypothesis1}, hence to prove Hypothesis \ref{Hypothesis1}, it suffices to prove conditions \ref{hyp1} and \ref{hyp3}. To do this, we use results on uniform distribution of sequences. We first define the discrepancy of a sequence.

\begin{definition}[\cite{KN12}] Let $x_1,\ldots, x_N$ be a finite sequence of real numbers, and for any $E\subseteq [0,1)$  let $A(E;N)$ denote the number of terms $x_n$, $1\leq n \leq N$, for which $\{x_n\}\in E$. The {\em discrepancy} of the sequence is the number 
\begin{equation*}
D_N(x_1,\ldots, x_N) = \sup_{0\leq \alpha <\beta \leq 1} \left| \frac{A([\alpha,\beta);N)}{N} - (\beta-\alpha)\right| \,, \end{equation*}
For an infinite sequence $\omega$ of real numbers, or a finite sequence containing at least $N$ terms, the discrepancy $D_N(\omega)$ is the discrepancy of the initial segment formed by the first $N$ terms of $\omega$.
\end{definition}

We can bound the discrepancy of a sequence using a theorem of Erd\H{o}s and Tur\'an.

\begin{theorem} [\cite{KN12}, Chapter 2, Theorem 2.5]\label{ETT} There is an absolute constant $C>0$ such that for any finite sequence $x_1,\ldots,x_N$ of real numbers and any positive integer $m$ we have 
\begin{equation*} 
D_N \leq C\left(\frac{1}{m} + \sum_{h=1}^m \frac{1}{h}\left| \frac{1}{N} \sum_{n=1}^N e^{2\pi i h x_n} \right| \right)\,. \end{equation*}
\end{theorem}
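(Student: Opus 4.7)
The plan is a Fourier-analytic attack: I would approximate the indicator function $\chi_{[\alpha,\beta)}$ of an arbitrary subinterval of $[0,1)$ by a pair of trigonometric polynomials of degree at most $m$, one from above and one from below. By a classical construction of Vaaler/Beurling--Selberg type, we may produce $1$-periodic trigonometric polynomials $T_m^+, T_m^-$ of degree $\le m$ satisfying (i) $T_m^-(\{x\}) \le \chi_{[\alpha,\beta)}(\{x\}) \le T_m^+(\{x\})$ for all real $x$, (ii) $\int_0^1 T_m^\pm(x)\,dx = (\beta-\alpha) \pm O(1/m)$, and (iii) nonzero Fourier coefficients $\hat{T}_m^\pm(h)$ supported on $1 \le |h| \le m$ with $|\hat{T}_m^\pm(h)| \le C_0/|h|$. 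A convenient route is to write $\chi_{[\alpha,\beta)}$ as a difference of translates of the sawtooth $\psi(x) = \{x\} - 1/2$ and smooth the truncated Fourier series of $\psi$ by a Fej\'er-type kernel to recover a one-sided inequality while preserving the $1/|h|$ decay.

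Given such $T_m^\pm$, the rest is bookkeeping. Evaluating the majorant/minorant at $x_1,\dots,x_N$ gives
\begin{equation*}
\frac{1}{N}\sum_{n=1}^N T_m^-(x_n) \;\le\; \frac{A([\alpha,\beta);N)}{N} \;\le\; \frac{1}{N}\sum_{n=1}^N T_m^+(x_n),
\end{equation*}
and expanding each side in its finite Fourier series separates out the constant term $(\beta-\alpha) + O(1/m)$ plus oscillating contributions bounded, via the triangle inequality and the decay in (iii), by $C\sum_{h=1}^m \frac{1}{h}\bigl| \frac{1}{N}\sum_{n=1}^N e^{2\pi i h x_n}\bigr|$. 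Subtracting $(\beta-\alpha)$ and taking the supremum over all $0 \le \alpha < \beta \le 1$ delivers the claimed bound on $D_N$ after absorbing the $O(1/m)$ term into the first summand.

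The main obstacle is producing the majorant/minorant with all three properties (i)--(iii) simultaneously; the naive partial Fourier series of $\chi_{[\alpha,\beta)}$ fails the one-sided condition and does not have the right $L^1$ distance. Since the statement is quoted directly from \cite{KN12}, in practice I would cite the Vaaler construction there as a black box and focus attention on checking that the remaining Fourier manipulations produce precisely the shape of the inequality claimed, with a single absolute constant $C$.
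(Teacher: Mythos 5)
The paper does not prove this statement; it is the classical Erd\H{o}s--Tur\'an inequality, quoted as a black box from Kuipers and Niederreiter. So there is no in-paper proof to compare against, and the right question is simply whether your outline is sound.

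It is. The Beurling--Selberg/Vaaler route you describe is the standard modern proof, and all three properties you ask of the majorant/minorant pair are exactly what the construction delivers: Vaaler's extremal trigonometric polynomials for the interval $[\alpha,\beta)$ have degree $\le m$, sandwich $\chi_{[\alpha,\beta)}$, integrate to $(\beta-\alpha)\pm\frac{1}{m+1}$, and have Fourier coefficients bounded by $\frac{1}{m+1}+\min\bigl(\beta-\alpha,\tfrac{1}{\pi|h|}\bigr)\ll 1/h$ for $1\le |h|\le m$. From there your bookkeeping is correct: evaluate at the $x_n$, isolate the constant term, absorb the $O(1/m)$ into the $1/m$ summand, and take the supremum over $\alpha,\beta$; uniformity in the interval is automatic because the construction is explicit and uniform. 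Two small remarks for completeness: (a) Kuipers--Niederreiter's own proof predates Vaaler and instead uses the original Erd\H{o}s--Tur\'an device of convolving with a Fej\'er-type kernel to get one-sided approximants; you already gesture at this alternative, and it yields the same qualitative bound. (b) When you say ``naive partial Fourier series fails the one-sided condition'' you correctly identify why the extremal construction is necessary; this is indeed the only genuine content of the proof, and citing it is appropriate given that the theorem itself is being cited from the literature.
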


We also use the following theorem for bounding exponential sums.

\begin{theorem} [\cite{KN12}, Chapter 1, Theorem 2.7]\label{2.7}
 Let $a$ and $b$ be integers with $a<b$, and let $f$ be twice differentiable on $[a,b]$ with $f''(x)\geq \rho >0$ or $f''(x)\leq -\rho <0$ for $x\in[a,b]$. Then
\begin{equation*}
\left| \sum_{n=a}^b e^{2\pi i f(n)}\right| \leq \left(\left|f'(b) - f'(a)\right| + 2\right) \left(\frac{4}{\sqrt{\rho}} + 3\right) \,. \end{equation*} 
\end{theorem}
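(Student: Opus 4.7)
The plan is to prove this as a van der Corput-style second derivative estimate, reducing to the Kuzmin–Landau lemma on subintervals where $f'$ avoids the integers.

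Without loss of generality assume $f''(x) \geq \rho > 0$ on $[a,b]$, so $f'$ is strictly increasing. The first step is to partition $[a,b]$ into subintervals $I_k$ on which $\lfloor f'(x)\rfloor = k$ is constant; since $f'$ increases from $f'(a)$ to $f'(b)$, the number of such intervals is at most $|f'(b)-f'(a)|+1$. On each $I_k$, translating $f$ by the integer-linear term $kx$ does not change $e^{2\pi i f(n)}$ up to multiplication by a unimodular constant, so I may assume $f'$ takes values in $[0,1)$ on $I_k$.

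The main tool is the Kuzmin–Landau inequality: if $g$ is continuously differentiable on $[c,d]$ with $g'$ monotonic and $\|g'(x)\| \geq \lambda > 0$ throughout, then $\bigl|\sum_{c\leq n\leq d} e^{2\pi i g(n)}\bigr| \ll 1/\lambda$. For a fixed small parameter $\delta > 0$ (to be optimized), I split each $I_k$ into at most three pieces: the portion where $f'(x) \in [0,\delta)$, the portion where $f'(x) \in [\delta,1-\delta]$, and the portion where $f'(x) \in (1-\delta,1)$. On the middle piece $\|f'\| \geq \delta$, so Kuzmin–Landau gives a contribution $\ll 1/\delta$. On each of the two outer pieces, $f'(x)$ is trapped in an interval of length $\delta$; since $f'' \geq \rho$ forces $f'$ to increase by at least $\rho$ per unit length, each such piece has length at most $\delta/\rho$, so the trivial bound gives $\ll \delta/\rho + 1$.

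Adding, the contribution of a single $I_k$ is $\ll \delta/\rho + 1/\delta + 1$. Choosing $\delta = \sqrt{\rho}$ optimizes this to $\ll 1/\sqrt{\rho} + 1$. Summing over the at most $|f'(b)-f'(a)|+1$ intervals $I_k$ and absorbing an extra unit term yields a bound of the form $(|f'(b)-f'(a)|+2)(C/\sqrt{\rho} + C')$, which is the claimed inequality. The main obstacle is purely bookkeeping: extracting the explicit constants $4$ and $3$ requires being careful about (i) the precise constant in Kuzmin–Landau (classically $1/(4\|f'\|)$ after summation by parts plus a geometric-series estimate), (ii) endpoint effects when $f'$ does not exactly hit integer values at $a$ and $b$, and (iii) the $+1$ losses from discretization in each subinterval. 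None of these are conceptually hard, but hitting the precise numerical constants in the statement requires choosing the split parameter $\delta$ and applying summation by parts with some care rather than in the crude form used above.
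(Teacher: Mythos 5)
The paper does not prove this statement; it is cited verbatim as Theorem 2.7 of Chapter 1 of Kuipers and Niederreiter \cite{KN12}, so there is no internal proof to compare against. Your proposal reproduces the standard van der Corput second-derivative argument that is essentially the one found in that reference: partition $[a,b]$ into pieces $I_k$ on which $\lfloor f'\rfloor$ is constant, reduce modulo the integer slope so that $f'\in[0,1)$, apply a Kuzmin--Landau (first-derivative) estimate where $\|f'\|$ is bounded below, and use the trivial bound together with $f''\geq\rho$ to control the short pieces near integer values of $f'$, optimizing the split parameter at $\delta=\sqrt{\rho}$. The structure is correct and the optimization gives the right $\rho^{-1/2}$ behavior.

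Two small bookkeeping remarks. First, the number of subintervals $I_k$ is at most $|f'(b)-f'(a)|+2$, not $+1$: if $f'(a)=0.9$ and $f'(b)=2.1$ then $\lfloor f'\rfloor$ already takes three values while $|f'(b)-f'(a)|+1=2.2$. This harmless overcount is exactly what the $+2$ in the theorem's statement absorbs. Second, on $I_k$ we have $e^{2\pi i(f(n)-kn)}=e^{2\pi i f(n)}$ \emph{exactly}, since $kn\in\mathbb{Z}$; there is no unimodular constant to track. As you correctly flag, extracting the specific constants $4$ and $3$ requires using the precise Kuzmin--Landau constant from \cite{KN12} (their Theorem 2.6) and tallying the $+1$ discretization losses carefully, but this is routine and the outline as given does lead to a bound of the claimed shape.
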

We will eventually use Theorems \ref{ETT} and \ref{2.7} to prove  Parts \ref{hyp1} and \ref{hyp3} of Hypothesis \ref{Hypothesis1} for for integers of the form $\lfloor f(n)\rfloor$ with $f$ a Leitmann function.
%In order to show the existence of bounded gaps, it suffices to show that the conditions of Theorem \ref{thmmaynard} are satisfied. Note that part \ref{hyp2} of the analogue of Hypothesis \ref{Hypothesis1} which we will prove involves $\mathcal{P}$, the set of all primes instead of $\mathcal{P}_{L, \mathcal{A}}$. Thus we have a weaker result in Theorem \ref{LeitmannBound} as compared to Theorem \ref{BeattyBound}.

\section{Proof of Theorem \ref{BeattyBound}} \label{sectbeatty}

Theorem \ref{BeattyBound} will follow from the following key result, whose proof we defer to the second part of the section.
\begin{theorem}\label{BeattyHyp}
Let $\alpha$ be an irrational number of finite type, fix some $c\in (0,1]$, and let $\mathcal{L}=\{L_1,\ldots, L_k\}$ be a set of linear forms $L_i$ each of the form $L_i(n)=n+l_i$. Set $\mathcal{A}:=\{\lfloor \alpha n\rfloor: \{\alpha n\} <c\}$, and let $\mathcal{P}$ be the set of primes. Then $(\mathcal{A},\mathcal{P},\mathcal{L}, \theta)$ satisfies Hypothesis \ref{Hypothesis1} for some $\theta>0$ independent of the choice of $\mathcal{L}$.
 \end{theorem}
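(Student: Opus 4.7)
The plan is to verify the three parts of Hypothesis \ref{Hypothesis1} separately. Parts (\ref{hyp1}) and (\ref{hyp3}) follow from the Erdős--Turán inequality (Theorem \ref{ETT}) applied to a dilated equidistribution problem on the circle, combined with the finite-type bound on Diophantine approximations to $\alpha$; part (\ref{hyp2}) is a Bombieri--Vinogradov theorem for Beatty primes in arithmetic progressions, which is the main obstacle.

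The first step is to rewrite membership in $\mathcal{A}$ as a fractional-part condition on the target integer. For $\alpha > 1$ and $c < \alpha$, an integer $n$ lies in $\mathcal{A}$ iff $[n/\alpha,(n+c)/\alpha)$ contains an integer, which up to a negligible exceptional set is equivalent to $\{-n/\alpha\} < c/\alpha$; the remaining cases are analogous. Writing $n = a + qk$, the count $\#\mathcal{A}(x;q,a)$ becomes, up to $O(1)$, the number of $k$ in an interval of length $N_q \asymp x/q$ for which $\{k(q/\alpha) + a/\alpha\}$ lies in a fixed arc of length $c/\alpha$, and this count equals $N_q(c/\alpha) + O(N_q D_{N_q})$, where $D_{N_q}$ is the discrepancy of $(kq/\alpha)_{k=1}^{N_q}$. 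Applying Theorem \ref{ETT} with parameter $M$, using that the inner geometric sums satisfy $\bigl|\sum_k e^{2\pi i h k q/\alpha}\bigr| \le \langle hq/\alpha\rangle^{-1}$, and noting that $1/\alpha$ has the same finite type $\tau$ as $\alpha$ (their continued-fraction expansions differ only by a shift) so that $\langle hq/\alpha\rangle \gg_\epsilon (hq)^{-\tau-\epsilon}$, yields
$$D_{N_q} \ll_\epsilon \frac{1}{M} + \frac{q^{\tau+\epsilon}M^{\tau+\epsilon}}{N_q}.$$
Optimising $M$ produces $N_q D_{N_q} \ll_\epsilon x^{(\tau+\epsilon)/(\tau+1+\epsilon)}$, a power saving uniform in $a$ and $q$. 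Summing over $q \le x^\theta$ verifies part (\ref{hyp1}) for any $\theta < 1/(\tau+1)$, and for each individual $q$ the same bound gives $\#\mathcal{A}(x;q,a) \ll \#\mathcal{A}(x)/q$, which is part (\ref{hyp3}). The $\theta$ so obtained depends only on $\tau$, hence is independent of $\mathcal{L}$, and the power saving dominates $(\log x)^{-100k^2}$ for any fixed $k$.

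Part (\ref{hyp2}) requires the analogous estimate with $\#\mathcal{A}(x;q,a)$ replaced by $\#\mathcal{P}_{L,\mathcal{A}}(x;q,a)$. I would approximate the indicator $\mathbf{1}[\{-n/\alpha\} < c/\alpha]$ by a truncated Fourier series at level $H$, using Beurling--Selberg majorants/minorants to control the approximation error, and reduce the count to a main term handled by the classical Bombieri--Vinogradov theorem plus averages over $q \le x^\theta$ of exponential sums of the shape $\sum_{p \equiv b \pmod q} e^{2\pi i h p/\alpha}$ with $p \in [x,2x]$. I would then decompose these sums via Vaughan's identity into Type I and Type II pieces and bound each by partial summation combined with the finite-type estimate for $\langle \cdot/\alpha\rangle$, producing a saving strong enough to absorb the $H$-many Fourier modes once $H$ is chosen a sufficiently small power of $x$. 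I expect this step to be the main obstacle: it is essentially a Bombieri--Vinogradov theorem for Beatty primes in arithmetic progressions, and the delicate point is balancing the Fourier truncation level $H$ against the saving in the exponential sums so that the aggregated error lies below $\#\mathcal{P}_{L,\mathcal{A}}(x)/(\log x)^{100k^2}$ for some $\theta > 0$ depending only on $\tau$.
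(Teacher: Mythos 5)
Your treatment of parts (\ref{hyp1}) and (\ref{hyp3}) is correct and takes a genuinely different route from the paper. You parametrize by the \emph{image} integers $n = a+qk$, reduce membership in $\mathcal{A}$ to the condition $\{-n/\alpha\} < c/\alpha$, and bound the discrepancy of $(kq/\alpha)_{k\le N_q}$ directly via Erd\H{o}s--Tur\'an, using the geometric-series estimate together with the finite-type lower bound $\langle hq/\alpha\rangle \gg_\epsilon (hq)^{-\tau-\epsilon}$. The paper instead works with the \emph{preimage} parameter (via the correspondence $n\mapsto\lfloor\alpha n\rfloor$) and derives the bound $D_N(\alpha/q)\ll (N/q)^{-1/\tau+\epsilon}$ from the classical estimate $D_N(\alpha)\ll N^{-1/\tau+\epsilon}$ by splitting the sequence into $q$ arithmetic progressions. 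Both approaches are sound; yours is arguably more self-contained (you do not need to quote the classical discrepancy bound for $(n\alpha)$ and only use the finite-type estimate and Erd\H{o}s--Tur\'an) and gives the slightly wider admissible range $\theta<1/(\tau+1)$ versus the paper's $\theta<1/(2\tau)$, though either is enough since only some positive $\theta$ is needed.

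For part (\ref{hyp2}) there is a genuine gap. You correctly identify the structure --- Fourier-approximate the Beatty indicator (you propose Beurling--Selberg, the paper uses Vinogradov's construction), feed the main Fourier mode into the classical Bombieri--Vinogradov theorem, and control the remaining modes by exponential sums over primes in progressions twisted by $e(h\,\cdot\,/\alpha)$ --- but you leave the crucial exponential-sum estimate as a Vaughan's-identity plan rather than a proof. That estimate, namely a power-saving bound for $\sum_{m\le M,\, m\equiv a\ (q)}\Lambda(m)\,e(\gamma k m)$ that is uniform in $a$ and in $q$ up to a small power of $M$, is exactly what the paper cites as Lemma 4.2 of Banks--Shparlinski \cite{BS09}, and it is the entire technical content of the Bombieri--Vinogradov statement for Beatty primes. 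Proving it requires a careful Type~I/Type~II decomposition with the right balance between the Fourier truncation level $H$, the modulus $q$, and the saving in the bilinear sums, and you flag it yourself as the main obstacle. Without either executing that argument or invoking the Banks--Shparlinski lemma, the verification of part (\ref{hyp2}) is incomplete, and hence so is the proof of Theorem \ref{BeattyHyp}.
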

 Assuming Theorem \ref{BeattyHyp}, we now establish Theorem \ref{BeattyBound}.
 \begin{proof}[Deduction of Theorem \ref{BeattyBound} from Theorem \ref{BeattyHyp}]
 We start with a classical result in Diophantine approximation \cite{KN12}. Let $\alpha$ be an irrational number of finite type $\tau$. Then, for every $\epsilon>0$, the discrepancy $D_N(\alpha):=D_N(\alpha,2\alpha,\ldots,N\alpha)$ satisfies
\begin{equation} \label{BeattyUniformDistribution}
 D_N(\alpha)\ll N^{(-1/\tau)+\epsilon}\,. \end{equation}

 Let $\mathcal{A}:=\{\lfloor \alpha n\rfloor: \{\alpha n\} <1/2\}$. By (\ref{BeattyUniformDistribution}), we have $\lvert \mathcal{A} \cap \{1,\ldots,N\} \rvert \sim \frac{N}{2\alpha}$; in particular, $\mathcal{A}$ is infinite. 
 
 We now construct a suitable admissible set $\mathcal{L}=\{L_1,\ldots,L_k\}$ of any given size $k\in \mathbb{Z}^+$. Let $W$ be the product of all primes $p\leq k$. Then, there is at least one residue class $a \pmod W$ such that infinitely many $n\in \mathcal{A}$ satisfy $n\equiv a\pmod W$. Pick $k$ distinct elements $l_1,\ldots,l_k\in \mathcal{A}$ such that $l_i\equiv a\pmod{q}$. Then the set $\mathcal{L}=\{L_1,\ldots, L_k\}$ with $L_i(n)=n+l_i$ is admissible, as for any $n$ the $L_i(n)$ represent only one residue class modulo $p$ for each $p\leq k$ and at most $k$ such classes modulo $p>k$. 

Our choice of $\mathcal{L}$ has the important property that if $n\in \mathcal{A}$, then $L_i(n)$ is in the Beatty sequence $\{\lfloor \alpha m\rfloor : m\in \mathbb{Z}^+\}$. For we have $n=\lfloor \alpha m\rfloor$ and $l_i=\lfloor \alpha m_i\rfloor $ for some $m, m_i\in \mathbb{Z}^+$  with $\{\alpha m\}<1/2$ and $\{\alpha m_i\}<1/2$, so we have $\alpha(m+m_i)=L_i(n)+\{\alpha m\}+\{\alpha m_i\}<L_i(n)+1$, whence $L_i(n)=\lfloor \alpha (m+m_i)\rfloor$ is in the Beatty sequence. 

We now apply Theorem \ref{thmmaynard}. We have by Theorem \ref{BeattyHyp} that the set $(\mathcal{A}, \mathcal{P}, \mathcal{L}, \theta)$ satisfies Hypothesis \ref{Hypothesis1} for some $\theta>0$. Moreover, from the Prime Number Theorem for primes in Beatty sequences of \cite{BS09}, we have the asymptotic bound 
$$
\#\mathcal{P}_{L_i,\mathcal{A}}(x)\sim \frac{x}{\alpha \log x} \sim \frac{\#\mathcal{A}(x)}{\log x}\,.
$$
As such, choosing $\epsilon>0$ and setting $\delta=1-\epsilon$,  for sufficiently large $x$ we have the inequality
$$
\frac{1}{k} \sum_{L\in \mathcal{L}} \#\mathcal{P}_{L_i,\mathcal{A}}(x) \geq \delta \frac{\#\mathcal{A}(x)}{\log x}\,.
$$
Then Theorem \ref{thmmaynard} implies that there exists some constant $C$ independent of $\mathcal{L}$ such that the bound
$$
\#\{n\in \mathcal{A}(x): \#(\{L_1(n),\ldots,L_k(n)\} \cap \mathcal{P}) \geq C^{-1}\delta \log k\} \gg \frac{\# \mathcal{A}(x)}{(\log x)^k e^{Ck}}
$$
holds. Fix any $m\in \mathbb{Z}^+$. We now set $k$  such that $m\leq C^{-1} \delta \log k$, and set of linear forms $\mathcal{L}=\{L_1,\ldots,L_k\}$ as described above. Then the above bound counts the number of $n\in \mathcal{A}$ such that at least $m$ of the integers $L_1(n),\ldots, L_k(n)$ are prime. However, if we have $n\in \mathcal{A}$, we know that $L_i(n)$ is in the Beatty sequence $\{\lfloor \alpha m\rfloor :m\in \mathbb{Z}^+\}$ for all $i$, so setting $\Delta_{\alpha,m}=\max_{1\leq i,j\leq k} \lvert l_i-l_j\rvert$ and $B=k$ we obtain the desired result.
\end{proof}
\begin{remark}
Using the second part of Theorem \ref{thmmaynard}, the above argument also can show that there are infinitely many bounded sets of \emph{consecutive} primes in a given Beatty sequence.
\end{remark}

It now remains to prove Theorem \ref{BeattyHyp}. We start by recalling a critical lemma from \cite{BS09}.

\begin{lemma}[\cite{BS09}, Lemma 4.2]\label{BSlemma}
Let $\alpha$ be a fixed irrational number of finite type $\tau$.  Then for every real number $0<\epsilon<1/(8\tau)$, there is a positive number $\eta$ such that for all integers $1\leq k\leq M^{\epsilon}$ and $0\leq a<q \leq M^{\epsilon/4}$ we have the bound
$$
\Bigl\lvert \sum_{\substack{m\leq M\\m\equiv a\pmod{q}}} \Lambda(m)e(\gamma k m)\Bigr\rvert \leq M^{1-\eta}\,,
$$
if $M$ is sufficiently large.
\end{lemma}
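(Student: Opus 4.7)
The plan is to apply Vaughan's identity to the von Mangoldt function $\Lambda$ and exploit the finite-type hypothesis on $\gamma$ (which in applications of this lemma is $\alpha$ or a fixed rational multiple thereof). Since $\Lambda$ is supported on prime powers, the contribution of $m$ with $\gcd(m, q) > 1$ is trivially $O(q \log M)$, so one may restrict to $\gcd(a, q) = 1$ and to $m$ coprime to $q$. The congruence $m \equiv a \pmod q$ is preserved through the Vaughan decomposition: in each bilinear piece $\sum_d a_d \sum_l b_l e(\gamma k d l)$, one fixes $d$ coprime to $q$ (the remaining $d$ contribute negligibly) and notes that the admissible $l$ with $dl \equiv a \pmod q$ form a single arithmetic progression modulo $q$.

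Applying Vaughan's identity at level $U = V = M^{1/3}$, the sum splits into $O((\log M)^2)$ bilinear pieces of two shapes. For the Type~I pieces, summing the inner geometric series in $l$ over its progression modulo $q$ bounds the contribution by $\sum_{d \le U^2} \tau(d) \min\bigl(M/(dq),\, \|\gamma k q d\|^{-1}\bigr)$, where $\tau(d)$ is the divisor function. For the Type~II pieces, Cauchy--Schwarz in the outer variable $d$ followed by opening the square in $l$ yields off-diagonal terms controlled by $\sum_{l \ne l'} \min\bigl(M/U,\, \|\gamma k q(l - l')\|^{-1}\bigr)$. Both expressions fall to the classical estimate
\[
\sum_{d \le D} \min\bigl(M/d,\, \|\xi d\|^{-1}\bigr) \ll \bigl(M/Q + D + Q\bigr)(\log MQ)^{O(1)},
\]
valid whenever $\xi$ has a rational approximation $|\xi - p/Q| \le 1/(QD)$ with $\gcd(p, Q) = 1$. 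Here, the finite-type hypothesis $\|n\gamma\| \gg n^{-\tau - o(1)}$, applied to $n = kqQ$, forces $Q \gg D^{1/(\tau + o(1))}/(kq)$, which is exactly what delivers a power saving in $M$ once $D$ is chosen appropriately.

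The main obstacle is bookkeeping: one must track how the $(kq)^{O(1)}$ losses from applying the finite-type bound at the dilated frequency $\gamma kq$ combine with the losses coming from the trivially-bounded edge terms in Vaughan's decomposition and the choice of parameter $U = V = M^{1/3}$. With $k \le M^\epsilon$ and $q \le M^{\epsilon/4}$, the product satisfies $kq \le M^{5\epsilon/4}$; the worst contribution, after Cauchy--Schwarz in the Type~II term, works out to an expression of the form $M \cdot (kq)^{\Theta(1)} M^{-\Theta(1/\tau)}$ up to logs, and the hypothesis $\epsilon < 1/(8\tau)$ is calibrated precisely so that the net exponent remains $1 - \eta$ for some $\eta > 0$. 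Making the exponent arithmetic explicit (as Banks and Shparlinski do in \cite{BS09}) yields the admissible value of $\eta$.
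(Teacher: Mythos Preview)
The paper does not give a proof of this lemma at all: it is simply quoted from Banks--Shparlinski \cite{BS09} as a black box, so there is no ``paper's own proof'' to compare against. Your sketch via Vaughan's identity, together with the Diophantine input $\|n\gamma\|\gg n^{-\tau-o(1)}$ from the finite-type hypothesis, is exactly the standard route and is in substance what \cite{BS09} does; the outline you give (Type~I/II decomposition, geometric-series bound on Type~I, Cauchy--Schwarz on Type~II, then the classical $\sum_d\min(M/d,\|\xi d\|^{-1})$ estimate with a good rational approximation to $\xi=\gamma kq$) is correct in shape.

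One small point worth tightening if you ever write this out in full: the lemma as stated in the paper has $\alpha$ of finite type but $\gamma$ in the exponential, and you correctly flag that in applications $\gamma$ is $\alpha^{-1}$ (or $c\alpha^{-1}$). You should note explicitly that an irrational of finite type $\tau$ has its reciprocal also of finite type $\tau$, so the Diophantine lower bound transfers. Beyond that, your bookkeeping remark about $kq\le M^{5\epsilon/4}$ and the calibration $\epsilon<1/(8\tau)$ is the right explanation for why the stated range of $\epsilon$ appears.
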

We use this lemma to prove an analogue of the Prime Number Theorem for primes in $\mathcal{A}$.

\begin{lemma}\label{PNTBeatty}
Let $\alpha$ and $\beta$  be fixed real numbers with $\alpha>1$ irrational, and of finite type, and fix $c\in(0,1]$ There exists a constant $\kappa>0$ depending only on $\alpha$ such that for all integers $0\leq a<q\leq N^\kappa$ with $(a,q)=1$, we have 
$$
\sum_{\substack{n\leq N\\\lfloor \alpha n+\beta \rfloor \equiv a \pmod{q}\\ \{\alpha n+\beta \} <c}}\Lambda \left(\lfloor \alpha n+\beta\rfloor\right)
= c\alpha^{-1}\sum_{\substack{m\leq \lfloor\alpha N+\beta\rfloor\\m\equiv a\pmod{q}}} \Lambda(m)+O(N^{1-\kappa})\,,
$$
where the implied constant depends only on $\alpha$, $\beta$, and $c$ and $\Lambda$ denotes the von Mangoldt function.
\end{lemma}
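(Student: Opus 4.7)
My plan is to pass from the sum over $n \le N$ to a sum over $m = \lfloor\alpha n + \beta\rfloor$, detect the fractional-part constraint via a Vaaler-type trigonometric approximation, and then invoke Lemma \ref{BSlemma} to control the resulting exponential sums over primes in arithmetic progressions. Because $\alpha > 1$ and $c \in (0,1]$, the conditions $\lfloor\alpha n + \beta\rfloor = m$ and $\{\alpha n + \beta\} < c$ together force $n$ to lie in the interval $[(m-\beta)/\alpha,\; (m-\beta+c)/\alpha)$, which has length $c/\alpha < 1$ and so contains at most one integer. Hence $n \mapsto m$ is injective on the support of the sum, and up to $O(\log N)$ boundary terms the left-hand side equals
\[
\sum_{\substack{m \le M \\ m \equiv a \pmod{q}}} \Lambda(m)\, I(m),
\]
where $M := \lfloor\alpha N + \beta\rfloor$ and $I(m)$ indicates that the above interval contains an integer in $[1,N]$. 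Setting $\gamma := 1/\alpha$ (irrational of the same finite type $\tau$ as $\alpha$), a short computation using $\{-x\} = 1 - \{x\}$ for non-integer $x$ gives $I(m) = \mathbf{1}[\{\gamma\beta - \gamma m\} < c\gamma]$ up to $O(1)$ endpoint corrections.

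Next I would apply Vaaler's trigonometric polynomial approximation: for any $H \ge 1$ there exist polynomials $V_H^{\pm}(t) = \sum_{|h| \le H} c_h^{\pm} e(ht)$ sandwiching the indicator $\mathbf{1}[\{t\} < c\gamma]$, with $c_0^{\pm} = c\gamma + O(1/H)$ and $|c_h^{\pm}| \ll 1/|h|$ for $h \ne 0$. Since $\Lambda \ge 0$, inserting these bounds and expanding the Fourier series shows that the sum above equals $c\gamma \sum_{m \le M,\, m \equiv a \pmod{q}} \Lambda(m) + E$, with an error satisfying
\[
|E| \;\ll\; \frac{1}{H}\!\sum_{\substack{m \le M \\ m \equiv a \pmod{q}}} \Lambda(m) \;+\; \sum_{0 < |h| \le H} \frac{1}{|h|} \Bigl| \sum_{\substack{m \le M \\ m \equiv a \pmod{q}}} \Lambda(m)\, e(-h\gamma m) \Bigr|.
\]
The first term is $\ll M/H$ by Chebyshev's bound, uniformly in $q$.

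To bound the inner exponential sums I would apply Lemma \ref{BSlemma} with irrational parameter $\gamma = 1/\alpha$. Fixing a small $\epsilon \in (0,\, 1/(8\tau))$ and taking $H = M^{\epsilon}$, the lemma gives $\ll M^{1-\eta}$ uniformly in $1 \le |h| \le H$, provided $q \le M^{\epsilon/4}$; note that replacing $h$ by $-h$ only conjugates the sum, so the single-signed hypothesis of Lemma \ref{BSlemma} suffices. The weighted sum over $h$ then contributes only an extra $\log M$ factor, so altogether $|E| \ll M^{1-\epsilon} + M^{1-\eta}\log M$, which is $O(N^{1-\kappa})$ for any $\kappa < \min(\epsilon,\eta)$. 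Choosing $\kappa$ small enough that $N^{\kappa} \le M^{\epsilon/4}$ (permissible since $M \asymp \alpha N$) accommodates the $q \le N^{\kappa}$ range in the statement and completes the proof. The main obstacle is organizational rather than analytic: one must keep the two layers of fractional-part analysis straight (the one on $\alpha n$ and the induced one on $\gamma m$), verify that $\gamma = 1/\alpha$ inherits the finite-type property so Lemma \ref{BSlemma} genuinely applies, and absorb the $O(1)$ endpoint shifts cleanly when passing between $n$- and $m$-summations; once these bookkeeping points are in place, the analytic content is exactly that of the Banks-Shparlinski prime-counting argument for Beatty sequences.
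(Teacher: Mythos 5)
Your proposal is correct and follows essentially the same strategy as the paper: pass from the $n$-sum to the $m$-sum via the bijection $n \mapsto m = \lfloor \alpha n + \beta\rfloor$, encode the fractional-part condition as $\{\gamma\beta - \gamma m\} < c\gamma$ with $\gamma = 1/\alpha$, approximate the indicator of that interval by a trigonometric polynomial, and then control the resulting exponential sums $\sum_{m\le M,\, m\equiv a\ (q)}\Lambda(m)e(h\gamma m)$ via Lemma~\ref{BSlemma}. The only cosmetic difference is that you invoke Vaaler's majorant/minorant polynomials where the paper uses Vinogradov's classical smoothing $\psi_\Delta$; because Vaaler's construction is already band-limited to $|h|\le H$, you avoid the paper's separate treatment of the tail $k> M^{\epsilon}$ of the Fourier series, but the quantitative conclusion and the role of the discrepancy bound are the same.
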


\begin{proof}
This proof follows the same method as the proof of Theorem 5.1 given in \cite{BS09}. We first rewrite the sum we wish to evaluate in a form more amenable to analysis, using the fact that $m$ can be written in the form $\lfloor \alpha n+\beta\rfloor$ with $\{\alpha n+\beta\} <c$ if and only if $0<\{\frac{m-\beta+c}{\alpha}\}\leq c\alpha^{-1}$.  Let $M=\lfloor \alpha N+\beta\rfloor$. We have
\begin{align*}
S_{\alpha,\beta,c;q,a}(N)
&:=\sum_{\substack{n\leq N\\\lfloor \alpha n+\beta \rfloor \equiv a \pmod{q}\\ \{\alpha n+\beta \} < c}}\Lambda \left(\lfloor \alpha n+\beta\rfloor\right)\\
&= \sum_{\substack{\lfloor\beta\rfloor \leq m\leq M \\0< \left\{ \frac{m-\beta+c}{\alpha}\right\}\leq c\alpha^{-1}\\m\equiv a\pmod{q}}}\Lambda (m)\\
&= \sum_{\substack{ m\leq M \\0<\left\{ \frac{m-\beta+c}{\alpha}\right\}\leq c\alpha^{-1}\\ m\equiv a \pmod{q}}}\Lambda (m)+O(1)\,.
\end{align*}
Let $\gamma=c\alpha^{-1}$, let $\delta=\alpha^{-1}(c-\beta)$, and let $\psi(x)$ be the periodic function with period 1 defined on $[0,1)$ by
$$
\psi(x)=\left\{
	\begin{array}{ll}
		1  & \mbox{if } 0< x \leq \gamma\\
		0 & \mbox{if }  \gamma< x\leq1.
	\end{array}
\right.
$$
At once we have 
\begin{equation}\label{FirstBeattyBound}
S_{\alpha,\beta,c;q,a}(N)=\sum_{\substack{m\leq M\\m\equiv a \pmod{q}}} \Lambda(m)\psi(\alpha^{-1}m+\delta) +O(1).
\end{equation}

Let $e(x)=e^{2\pi i x}$. We wish to approximate $\psi(x)$ in terms of a Fourier series $\sum_{k\in \mathbb{Z}} a_k e(kx)$. To do so, we use a result of Vinogradov \cite[Chapter I, Lemma 12]{Vin04} which states that for any $\Delta$ such that $0< \Delta <1/8$ and $\Delta\leq \min(\gamma,1-\gamma)/2$ we have a real-valued function $\psi_\Delta(x)$ that satisfies the following properties:
\begin{enumerate}
\item $\psi_{\Delta}(x)$ is periodic with period one;\\
\item $0\leq \psi_\Delta(x)\leq 1$ for all $x$;\\
\item $\psi_\Delta(x)=\psi(x)$ if $\Delta \leq x \leq \gamma-\Delta$ or if $\gamma+\Delta\leq x\leq 1-\Delta$;\\
\item $\psi_\Delta(x)$ has a Fourier series
$$
\psi_\Delta(x)=\gamma+\sum_{k=1}^{\infty} (g_ke(kx)+h_ke(-kx))
$$
where the coefficients $g_k, h_k$ satisfy the bound
\begin{equation}\label{gkhk}
\max(\lvert g_k\rvert, \lvert h_k \rvert )\leq \min\left(\frac{2}{\pi k}, \frac{2}{\pi^2k^{2}\Delta}\right)\,.
\end{equation}
\end{enumerate}
As such, we can replace $\psi$ with $\psi_\Delta$ in (\ref{FirstBeattyBound}), at a cost of admitting some additional error. Let $\mathcal{I}=[0,\Delta)\cup (\gamma-\Delta,\gamma+\Delta)\cup (1-\Delta,1)$ and let $V(\mathcal{I},M)$ denote the number of $m\leq M$ such that $m\equiv a\pmod{q}$ and $\{\alpha^{-1}m+\delta\}\in \mathcal{I}$. Then we have
\begin{equation}\label{SecondBeattyBound}
S_{\alpha,\beta,c;q,a}(N)=\sum_{\substack{m\leq M\\m\equiv a\pmod{q}}} \Lambda(m)\psi_\Delta(\alpha^{-1}m+\delta)+O(1+V(\mathcal{I},M))\,.
\end{equation}
We bound $V(\mathcal{I},M)$ using the bound on the discrepancy of (\ref{BeattyUniformDistribution}). We have
\begin{equation}\label{VIMBound}
V(\mathcal{I},M)\ll \Delta N+N^{1-\epsilon}\,,
\end{equation}
where the implied constant depends only on $\alpha$.  Now we evaluate the main term using the Fourier expansion for $\psi_\Delta$. We have 
\begin{equation}\label{psid}
\begin{split}
\sum_{\substack{m\leq M\\m\equiv a\pmod{q}}} \Lambda(m)\psi_\Delta(m)
= \gamma \sum_{\substack{m\leq M\\m\equiv a\pmod{q}}}\Lambda(m)
&+\sum_{k\geq 1} g_k e(k\delta) \sum_{\substack{m\leq M\\m\equiv a\pmod{q}}} \Lambda(m)e(\alpha^{-1}km)\\
&+\sum_{k\geq 1} h_k e(-k\delta) \sum_{\substack{m\leq M\\m\equiv a\pmod{q}}} \Lambda(m)e(-\alpha^{-1}km)\,.
\end{split}
\end{equation}
We show that every term in the right-hand side except for the first is small.  Using Lemma \ref{BSlemma}, we first determine a bound for terms with $k$ small. In particular, using that $\alpha^{-1}$ has finite type, for any $0\leq a<q\leq M^{\epsilon/4}$ we have the bound
\begin{equation}\label{gk1}
\sum_{k\leq M^\epsilon} g_k e(k\delta) \sum_{\substack{m\leq M\\m\equiv a\pmod{q}}} \Lambda(m)e(\alpha^{-1}km)\ll M^{1-\eta} \sum_{k\leq M^{\epsilon}} k^{-1}\ll M^{1-\eta/2}\,,
\end{equation}
with $\epsilon$ and $\eta$ the constants determined by Lemma \ref{BSlemma}. In addition, we have
\begin{equation}\label{hk1}
\sum_{k\leq M^\epsilon} h_k e(-k\delta) \sum_{\substack{m\leq M\\m\equiv a\pmod{q}}} \Lambda(m)e(-\alpha^{-1}km)\ll  M^{1-\eta/2}\,.
\end{equation}
For larger values of $k$, a trivial bound suffices. Indeed, using the Prime Number Theorem for arithmetic progressions and (\ref{gkhk}), we have
\begin{equation}\label{gk2}
\sum_{k> M^\epsilon} g_k e(k\delta) \sum_{\substack{m\leq M\\m\equiv a\pmod{q}}} \Lambda(m)e(\alpha^{-1}km)\ll M \sum_{k>M^\epsilon} g_k \ll M^{1-\epsilon} \Delta^{-1}\,
\end{equation}
and
\begin{equation}\label{hk2}
\sum_{k> M^\epsilon} h_k e(-k\delta) \sum_{\substack{m\leq M\\m\equiv a\pmod{q}}} \Lambda(m)e(-\alpha^{-1}km) \ll M^{1-\epsilon}\Delta^{-1}\,.
\end{equation}
Combining (\ref{psid}) --- (\ref{hk2}), we have the equation
$$
\sum_{\substack{m\leq M\\m\equiv a\pmod{q}}} \Lambda(m)\psi_\Delta(m)
= \gamma \, \sum_{\substack{m\leq M\\m\equiv a\pmod{q}}}\Lambda(m) +O(M^{1-\eta/2}+M^{1-\epsilon}\Delta^{-1})\,,
$$
where $\eta$ and $\epsilon$ depend only on $\alpha$, and the implied constant on $\alpha$, $\beta$, and $c$. Combining this equation with our original expression (\ref{SecondBeattyBound}) for $S_{\alpha,\beta,c;q,a}(N)$ and (\ref{VIMBound}), we have
$$
S_{\alpha,\beta,c;q,a}(N)=\gamma \sum_{\substack{m\leq M\\m\equiv a\pmod{q}}}\Lambda(m) +O(\Delta N+N^{1-\epsilon}+N^{1-\eta/2}+N^{1-\epsilon}\Delta^{-1})\,.
$$
Setting $\Delta=N^{-\epsilon/4}$ produces the desired result.
\end{proof} 
\begin{remark}
Theorem 5.4 of \cite{BS09} is precisely the case $c=1$ of this lemma, and is proved similarly.
\end{remark}
We now prove Part \ref{hyp2} of Hypothesis \ref{Hypothesis1}.
\begin{proposition}\label{BeattyHyp2} %Identical wording to overall hypothesis?
Let $\alpha$, $c$, $\mathcal{A}$, $\mathcal{P}$, and $\mathcal{L}$ be as in the statement of Theorem \ref{BeattyHyp}. Then there exists a constant $\theta>0$ depending only on $\alpha$ such that for any $L\in \mathcal{L}$ and any $A>0$ we have
$$
\sum_{q\leq x^\theta} \max_{(L(a),q)=1} \Bigl\lvert \#\mathcal{P}_{L,\mathcal{A}}(x;q,a)-\frac{\#\mathcal{P}_{L,\mathcal{A}}(x)}{\phi(q)}\Bigr\rvert \ll_A \frac{\#\mathcal{P}_{L,\mathcal{A}}(x)}{(\log x)^A}\,.
$$
\end{proposition}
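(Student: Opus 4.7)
The plan is to reduce Proposition \ref{BeattyHyp2} to the classical Bombieri--Vinogradov theorem, with Lemma \ref{PNTBeatty} serving as the bridge between sums restricted to the Beatty subset $\mathcal{A}$ and ordinary Chebyshev sums in arithmetic progressions. Define the weighted analog of $\#\mathcal{P}_{L,\mathcal{A}}(x;q,a)$ by
\[
\Psi_{L,\mathcal{A}}(x;q,a) := \sum_{\substack{n\in\mathcal{A}(x)\\ n\equiv a\pmod q}} \Lambda(L(n)).
\]
Absorbing prime powers costs only $O(\sqrt{x}\log x)$, and the inequality for $\Psi_{L,\mathcal{A}}$ transfers to one for $\#\mathcal{P}_{L,\mathcal{A}}$ by partial summation (using $\#\mathcal{P}_{L,\mathcal{A}}(x)\asymp x/\log x$). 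So it suffices to prove
\[
\sum_{q\le x^\theta}\max_{(L(a),q)=1}\Bigl\lvert\Psi_{L,\mathcal{A}}(x;q,a)-\frac{\Psi_{L,\mathcal{A}}(x)}{\phi(q)}\Bigr\rvert \ll_A \frac{x}{(\log x)^A}
\]
for some $\theta>0$ depending only on $\alpha$ and every $A>0$.

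Since $L(n)=n+l$ with $l\in\mathbb{Z}$, we have $\lfloor\alpha m\rfloor+l=\lfloor\alpha m+l\rfloor$ and $\{\alpha m+l\}=\{\alpha m\}$. Consequently $\Psi_{L,\mathcal{A}}(x;q,a)$ may be written as the difference $S_{\alpha,l,c;q,a+l}(N_2)-S_{\alpha,l,c;q,a+l}(N_1)$, where the $N_i$ are chosen so that $\lfloor\alpha N_i+l\rfloor$ equal the endpoints $x+l$ and $2x+l$ of the induced range on $\lfloor\alpha m+l\rfloor$. Applying Lemma \ref{PNTBeatty} to each piece yields, uniformly for $q\le x^\kappa$ with $(a+l,q)=1$,
\[
\Psi_{L,\mathcal{A}}(x;q,a) = c\alpha^{-1}\bigl(\psi(2x+l;q,a+l)-\psi(x+l;q,a+l)\bigr) + O(x^{1-\kappa}),
\]
for some $\kappa>0$ depending only on $\alpha$, where $\psi(y;q,b)=\sum_{m\le y,\ m\equiv b\pmod q}\Lambda(m)$. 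The unrestricted case (or the trivial $q=1$) gives $\Psi_{L,\mathcal{A}}(x) = c\alpha^{-1}x + O(x^{1-\kappa})$, so replacing $c\alpha^{-1}x/\phi(q)$ by $\Psi_{L,\mathcal{A}}(x)/\phi(q)$ in the main term costs only $O(x^{1-\kappa}/\phi(q))$.

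Now fix $\theta\in(0,\min(\kappa,\tfrac12))$. Summed over $q\le x^\theta$, the pointwise error $O(x^{1-\kappa})$ contributes $O(x^{1+\theta-\kappa})\ll_A x/(\log x)^A$ for every $A$. The remaining main term is bounded by the classical Bombieri--Vinogradov theorem applied at heights $2x+l$ and $x+l$, combined by the triangle inequality:
\[
\sum_{q\le x^\theta}\max_{(a',q)=1}\Bigl\lvert\psi(2x+l;q,a')-\psi(x+l;q,a')-\frac{x}{\phi(q)}\Bigr\rvert \ll_A \frac{x}{(\log x)^A}.
\]
Taking $A$ larger than $100k^2$ and converting back from $\Psi_{L,\mathcal{A}}$ recovers the statement of the proposition. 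The substantive step is the second paragraph, where Lemma \ref{PNTBeatty} converts the Beatty restriction into a density factor $c\alpha^{-1}$ with an error uniform in $q$ throughout the Bombieri--Vinogradov range; once that reduction is in place, the rest is classical. The main care required lies in splitting the $m$-range, applying Lemma \ref{PNTBeatty} twice while preserving uniformity in $(q,a)$, and tracking the shift $a\mapsto a+l$ imposed by the linear form.
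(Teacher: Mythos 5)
Your proof is correct and takes essentially the same route as the paper: Lemma \ref{PNTBeatty} supplies the density factor $c\alpha^{-1}$ uniformly in the Bombieri--Vinogradov range, the classical Bombieri--Vinogradov theorem handles the resulting main term, and the triangle inequality stitches the two together. The only cosmetic difference is that you carry $\Lambda$-weighted sums through to the end and apply partial summation once at the close, whereas the paper applies partial summation to Lemma \ref{PNTBeatty} first to pass to $\pi$-form statements and then invokes Bombieri--Vinogradov for $\pi(x;q,a)$; the content is identical.
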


\begin{proof}[Proof of Proposition \ref{BeattyHyp2}]
Using partial summation on Lemma \ref{PNTBeatty} there is some $\kappa>0$ such that for all $q\leq x^\kappa$ we have the estimates
$$
\#\mathcal{P}_{L_i,\mathcal{A}}(x;q,a)=c \alpha^{-1} \pi(x;q,a)+O(N^{1-\kappa})\,,
$$
and
$$
\#\mathcal{P}_{L_i,\mathcal{A}}(x)=c \alpha^{-1} \pi(x)+O(N^{1-\kappa})\,.
$$
Then, taking a sum over all $q\leq x^{\kappa/2}$, we obtain the bounds
\begin{equation}\label{BVBxqa}
\sum_{q\leq x^{\kappa/2}} \max_{(a,q)=1}\left\lvert \#\mathcal{P}_{L_i,\mathcal{A};q,a}(x)-c\alpha^{-1} \pi(x;q,a)\right\rvert \ll N^{1-\kappa/2}\,,
\end{equation}
and
\begin{equation}\label{BVBx}
\sum_{q\leq x^{\kappa/2}} \left\lvert\frac{ \#\mathcal{P}_{L_i,\mathcal{A}}(x)}{\phi(q)}-\frac{c}{\alpha \phi(q)} \pi(x)\right\rvert \ll N^{1-\kappa/2}\,.
\end{equation}
Combining (\ref{BVBxqa}) and (\ref{BVBx}) with the Bombieri-Vinogradov theorem and using the triangle inequality hence produces
\begin{equation}\label{BVBeatty}
\sum_{q\leq x^{\kappa/2}} \max_{(a,q)=1}\left\lvert \#\mathcal{P}_{L_i,\mathcal{A};q,a}(x)-\frac{\#\mathcal{P}_{L_i,\mathcal{A}}(x)}{\phi(q)}\right\rvert \ll_A\frac{N}{(\log N)^A}\,.
\end{equation}
Proposition \ref{BeattyHyp2} follows with $\theta=\kappa/2$ by evaluating the bound of (\ref{BVBeatty}) at $2x$ and $x$ and subtracting.
\end{proof}

We now establish Theorem \ref{BeattyHyp} by proving the other two parts of Hypothesis \ref{Hypothesis1} for some positive $\theta$ and our choice of $\mathcal{A}$, $\mathcal{P}$, and $\mathcal{L}$.
\begin{proof}[Proof of Theorem \ref{BeattyHyp}]
Let $\alpha$ be a irrational number of finite type $\tau$ and let $\mathcal{A}$ and $\mathcal{P}$ be as in the statement of the lemma. By (\ref{BeattyUniformDistribution}), the discrepancy $D_N(\alpha)$ of the sequence $\alpha, 2\alpha,\ldots N\alpha$ satisfies $D_N(\alpha)=O(N^{(-1/\tau)+\epsilon})$ for any positive $\epsilon$. We will use this to prove Parts \ref{hyp1} and \ref{hyp3} of Hypothesis \ref{Hypothesis1}. 

Fix $\epsilon>0$ small, and let $C>0$ be a constant such that the inequality $D_N(\alpha)<C N^{(-1/\tau)+\epsilon}$ holds. We show that $D_N(\alpha/q)\ll  (N/q)^{(-1/\tau)+\epsilon}$ for any $q\in \mathbb{Z}^+$, where $q$ may vary with $N$. For, using the triangle inequality %If determined to be nontrivial, is Thm 2.6 of Ch 2 of KN12
to divide the expression for the discrepancy into $q$ pieces, we have
$$
D_N\left(\frac{\alpha}{q}\right) \leq \frac{1}{N}\sum_{0< a\leq q} \left\lceil \frac{N-a+1}{q}\right\rceil D_{\left\lceil \frac{N-a+1}{q}\right\rceil}\left(\frac{a \alpha}{q}, \alpha+\frac{a\alpha}{q},\ldots, \left\lfloor\frac{N-a}{q}\right\rfloor\alpha+\frac{a\alpha}{q}\right)\,.
$$
Since a translation of a sequence sends a interval of values modulo 1 to at most 2 intervals, we have $D_{\left\lceil \frac{N-a+1}{q}\right\rceil}\left(\frac{a\alpha}{q},\ldots, \left\lfloor\frac{N-a}{q}\right\rfloor\alpha+\frac{a\alpha}{q}\right) \leq 2 D_{\left\lceil \frac{N-a+1}{q}\right\rceil}\left(a\alpha,\ldots,\left\lceil\frac{N-a+1}{q}\right\rceil\alpha\right)$. Then, using our above bound on $D_N(\alpha)$, we have
\begin{equation}
D_N(\alpha/q)\leq \frac{2}{N}\sum_{0<a\leq q} \left\lceil \frac{N-a+1}{q}\right\rceil C \left(\left\lceil \frac{N-a+1}{q}\right\rceil\right)^{(-1/\tau)+\epsilon} \leq \frac{2Cq}{N} \left(\frac{N+q}{q}\right)^{(-1/\tau)+\epsilon+1}.
\end{equation}
Assuming $q<N$ we then have 
\begin{equation}\label{BeattyqDiscrepancy}
D_N\left(\frac{\alpha}{q}\right)\ll\left(\frac{N}{q}\right)^{(-1/\tau)+\epsilon}
\end{equation}
with an implied constant independent of $q$.

Define $G_{\mathcal{A}}(x;q,a):=\#(\mathcal{A}\cap \{a,a+q,a+2q,\ldots,a+\lfloor \frac{x-a}{q}\rfloor q\}) $ and define $G_{\mathcal{A}}(x):=\#(\mathcal{A}\cap\{1,\ldots,x\})$. We then have that $G_{\mathcal{A}}(x;q,a)$ is the number of $n$ such that $\frac{a}{q}\leq \left\{\frac{\alpha n}{q}\right\} < \frac{a+c}{q}$ and $\alpha n\leq x$, via the correspondence $n\mapsto \lfloor \alpha n\rfloor$. Then, by (\ref{BeattyqDiscrepancy}), we have
$$
G_{\mathcal{A}}(x;q,a)=\frac{cx}{\alpha q}+O\left(x\left(\frac{x}{q}\right)^{(-1/\tau)+\epsilon}\right)\,,
$$
and setting $q=1$ gives us the bound
$$
G_{\mathcal{A}}(x)=\frac{cx}{\alpha}+O\left(x^{(-1/\tau)+\epsilon+1}\right)\,.
$$
Suppose $q<N^{\frac{1}{2\tau}-\epsilon}$. Then the two preceding equations gives us
\begin{equation}\label{GAqbound}
\left\lvert G_{\mathcal{A}}(x;q,a)-\frac{G_{\mathcal{A}}(x)}{q}\right\rvert \ll x^{1-\frac{1}{2\tau}}\,.
\end{equation}
Evaluating (\ref{GAqbound}) at $x$ and $2x$, then subtracting, produces the bound
\begin{equation}
\left\lvert \#\mathcal{A}(x;q,a)-\frac{\#\mathcal{A}(x)}{q}\right\rvert \ll x^{1-\frac{1}{2\tau}}\,.
\end{equation}
Parts \ref{hyp1} and \ref{hyp3} of Hypothesis \ref{Hypothesis1} follow immediately from this bound when $\theta=\frac{1}{2\tau}-\epsilon$. Since $\epsilon>0$ was arbitrary, parts \ref{hyp1} and \ref{hyp3} hold for all $\theta<\frac{1}{2\tau}$. 

As such, since we have already proven Part \ref{hyp2} of Hypothesis \ref{Hypothesis1} as Proposition \ref{BeattyHyp2}, with $\theta=\frac{\kappa}{2}$, we have that $(\mathcal{A},\mathcal{P},\mathcal{L}, \theta)$ satisfies Hypothesis \ref{Hypothesis1} for any $\mathcal{L}$ whenever $\theta <\min(\frac{\kappa}{2},\frac{1}{2\tau})$, as was to be shown.
\end{proof}

\section{Proof of Theorem \ref{LeitmannBound}} \label{sectleitmann}
To prove Theorem \ref{LeitmannBound}, we show that Leitmann functions satisfy Hypothesis \ref{Hypothesis1}. For the rest of the section, fix a Leitmann function $f:[c_0,\infty)\rightarrow \mathbb{R}^+$ with $f(c_0)=c_3$, and let $\alpha_1$, $\alpha_2$, $\alpha_3$ be as in Definition \ref{defleitmanntype}. Let 
\begin{equation*} \mathcal{A}=\{\lfloor f(n)\rfloor:n\in \mathbb{Z}\cap [c_0,\infty)\}\,.\end{equation*} We show that $\mathcal{A}$ and $\mathbb{P}$ satisfy Hypothesis \ref{Hypothesis1} for some positive $\theta$ and any set of linear forms and any finite admissible set of linear forms $\mathcal{L}=\{L_1,\ldots,L_k\}$ with $L_i(n)=n+l_i$.  Let $g$ denote the inverse function of $f$.

\begin{lemma}\label{LeitmannHyp2}
The set $\mathcal{A},\mathcal{P},\mathcal{L}$ described above satisfies part \ref{hyp2} of Hypothesis \ref{Hypothesis1} for some positive constant $\theta$ that does not depend on $\mathcal{L}$.
\end{lemma}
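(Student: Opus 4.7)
The plan is to absorb the shift $l_i$ from each linear form $L_i(n) = n + l_i$ into the Leitmann function itself, reducing Part \ref{hyp2} of Hypothesis \ref{Hypothesis1} directly to Part \ref{Leitmann-Vinogradov} of Theorem \ref{leitmannthm}. Since $l_i \in \mathbb{Z}$,
\[
L_i(\lfloor f(m)\rfloor) = \lfloor f(m)\rfloor + l_i = \lfloor f(m) + l_i\rfloor = \lfloor \tilde f_i(m)\rfloor,
\]
where $\tilde f_i(x) := f(x) + l_i$. My first step is to verify that $\tilde f_i \in \mathcal{F}$: since $\tilde f_i^{(j)} = f^{(j)}$ for every $j \geq 1$, monotonicity, the relations for $i \in \{2,3\}$, the auxiliary $s(x), t(x)$ condition in the case $\alpha_2 = 0$, and the growth bound $\tilde f_i(x) \ll x^{12/11 - \epsilon_1}$ all transfer immediately from $f$ to $\tilde f_i$; only the $i = 1$ relation $x\tilde f_i'(x) = \tilde f_i(x)(\alpha_1 + o(1))$ needs a brief check, and it follows from $l_i/f(x) = o(1)$. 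Write $\tilde g_i := \tilde f_i^{-1}$, so that $\tilde g_i(y) = g(y - l_i)$.

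Next I translate the count. For $n = \lfloor f(m)\rfloor \in \mathcal{A}(x)$, the conditions $n \equiv a \pmod{q}$ together with $L_i(n)$ prime become: $p := L_i(n) = \lfloor \tilde f_i(m)\rfloor$ is a prime with $p \equiv L_i(a) \pmod{q}$ lying in $[x + l_i,\, 2x + l_i)$. Setting $b := L_i(a)$, which satisfies $(b,q) = 1$ by hypothesis, I obtain
\[
\#\mathcal{P}_{L_i,\mathcal{A}}(x;q,a) = \pi_{\tilde f_i}(2x + l_i;\, q, b) - \pi_{\tilde f_i}(x + l_i;\, q, b) + O(1).
\]
I then apply Part \ref{Leitmann-Vinogradov} of Theorem \ref{leitmannthm} to $\tilde f_i$ with an exponent $A$ to be chosen later, obtaining a level $\theta_f > 0$ depending only on $f$. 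The inner maximum over $y$ in Leitmann's bound handles both summands simultaneously. Comparing against the main term $\phi(q)^{-1}\int_{x + l_i}^{2x + l_i} \tilde g_i'(t)/\log t\, dt$ via the triangle inequality, and combining with the $q = 1$ case applied to $\#\mathcal{P}_{L_i,\mathcal{A}}(x)$, summation over $q \leq x^{\theta_f}$ together with the standard estimate $\sum_{q \leq x^{\theta_f}} 1/\phi(q) \ll \log x$ yields
\[
\sum_{q \leq x^{\theta_f}} \max_{(L_i(a), q) = 1} \Bigl| \#\mathcal{P}_{L_i,\mathcal{A}}(x;q,a) - \frac{\#\mathcal{P}_{L_i,\mathcal{A}}(x)}{\phi(q)} \Bigr| \ll_A \frac{g(x)}{(\log x)^{A - 1}}.
\]

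The only mildly technical obstacle will be converting this absolute error into the relative error $\#\mathcal{P}_{L_i,\mathcal{A}}(x)/(\log x)^{100k^2}$ demanded by Hypothesis \ref{Hypothesis1}. For this I will use Part \ref{thmleitmannbound1} of Theorem \ref{leitmannthm} with $q = 1$ applied to $\tilde f_i$, which gives $\#\mathcal{P}_{L_i,\mathcal{A}}(x) \sim (g(2x) - g(x))/\log x$, combined with the regularity estimate $g(2x) - g(x) \gg g(x)$ that follows from the Leitmann relation $xf'(x) \sim \alpha_1 f(x)$ (this forces $f$ to behave essentially like $x^{\alpha_1}$ up to slowly varying logarithmic factors, so $g$ is comparable to a root of $y$ and doubling the argument roughly multiplies $g$ by a fixed constant). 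Hence $\#\mathcal{P}_{L_i,\mathcal{A}}(x) \gg g(x)/\log x$. Since Theorem \ref{leitmannthm}(\ref{Leitmann-Vinogradov}) furnishes its bound for every $A > 0$ at the same level $\theta_f$, choosing $A = 100k^2 + 2$ produces the inequality required by Hypothesis \ref{Hypothesis1}(\ref{hyp2}) with $\theta = \theta_f$, which depends only on $f$ and is independent of $\mathcal{L}$, completing the proof.
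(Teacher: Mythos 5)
Your proof follows essentially the same route as the paper's: translate by $l_i$ to form $\tilde f_i = f + l_i$, apply Part \ref{Leitmann-Vinogradov} of Theorem \ref{leitmannthm} to the shifted function at the two endpoints, and combine with the $q=1$ case of Part \ref{thmleitmannbound1} via the triangle inequality. You are somewhat more explicit than the paper about verifying $\tilde f_i \in \mathcal{F}$ and converting the absolute error $g(x)/(\log x)^A$ into the relative error required by Hypothesis \ref{Hypothesis1}(\ref{hyp2}), but the core argument is identical.
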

\begin{proof}
Consider the translated function $f_{L_i}(n):=f(n)+l_i$. By Part \ref{Leitmann-Vinogradov} of Theorem \ref{leitmannthm} applied to $f_{L_i}$ at $N=x$ and $N=2x$, for some $\theta>0$  we have
\begin{equation}\label{411}
\sum_{q\leq x^\theta} \max_{(l,q)=1}\left| \#\mathcal{P}_{L_i,\mathcal{A}}(x;q,l)-\frac{1}{\phi(q)} \int_{x}^{2x} \frac{g'(t)}{\log t} dt\right| \ll_A \frac{g(2x)+g(x)}{(\log N)^A}\,,
\end{equation}

By Part \ref{thmleitmannbound1} of Theorem \ref{leitmannthm} we also have the trivial bound
\begin{equation}\label{412}
\sum_{q\leq N^{\theta}} \frac{1}{\phi(q)}  \left\vert \#\mathcal{P}_{L_i,\mathcal{A}}(x) - \int_{x}^{2x} \frac{g'(t)}{\log t} dt\right| \ll_A  \frac{g(x)+g(2x)}{(\log N)^A}\,.
\end{equation}
Combining  (\ref{411}) and (\ref{412}) using the triangle inequality, we have
\begin{equation}\label{BVinter}
\sum_{q\leq N^{\theta}} \max_{(l,q)=1}\left| \#\mathcal{P}_{f_{L_i}}(y;q,l)-\frac{1}{\phi(q)}\#\mathcal{P}_{f_{L_i}}(y;1,0)\right| \ll_A \frac{g(N)}{(\log N)^A}\,,
\end{equation}
completing the proof.

\end{proof}

Now we must establish parts \ref{hyp1} and \ref{hyp3} of Hypothesis \ref{Hypothesis1}. As in the Beatty prime case above, it will follow from the following bound on the discrepancy of the sequence $\{\frac{f(n)}{q}\}_{n\in\mathbb Z}$.
\begin{proposition}\label{LeitmannDisc}
There is a positive absolute constant $\theta$ such that discrepancy $D_N\left(\frac{f(n)}{q}\right)$ of the sequence $\frac{f(c_0)}{q},\ldots,\frac{f(N+c_0-1)}{q}$ satisfies the bound
$$
D_N\left(\frac{f(n)}{q}\right)\ll N^{-\theta}\,,
$$
when $q\ll N^{\theta}$. The implied constant depends only on $f$ and $\theta$.
\end{proposition}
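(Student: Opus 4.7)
The plan is to reduce the discrepancy estimate to exponential sum estimates via the Erd\H{o}s--Tur\'an inequality (Theorem \ref{ETT}), and then to control each exponential sum using Theorem \ref{2.7} together with the asymptotic information for $f'$ and $f''$ furnished by Definition \ref{defleitmanntype}. Concretely, Theorem \ref{ETT} applied to the sequence $\frac{f(c_0)}{q},\ldots,\frac{f(N+c_0-1)}{q}$ gives, for any positive integer $m$,
\begin{equation*}
D_N\!\left(\frac{f(n)}{q}\right) \ll \frac{1}{m} + \sum_{h=1}^{m}\frac{1}{h}\left|\frac{1}{N}\sum_{n=c_0}^{N+c_0-1} e\!\left(\frac{h f(n)}{q}\right)\right|,
\end{equation*}
so the task reduces to a uniform upper bound on the inner sum for $1\le h\le m$.

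For the inner sum, set $F(x)=hf(x)/q$, so that $F''(x)=hf''(x)/q>0$ on $[c_0,N+c_0-1]$ by the Leitmann hypotheses. Theorem \ref{2.7} applies and yields
\begin{equation*}
\left|\sum_{n=c_0}^{N+c_0-1} e(F(n))\right| \le \bigl(|F'(N+c_0-1)-F'(c_0)|+2\bigr)\left(\frac{4}{\sqrt{\rho}}+3\right),
\end{equation*}
where $\rho$ is any positive lower bound for $F''$ on the interval. The relations $xf^{(i)}(x)=f^{(i-1)}(x)(\alpha_i+o(1))$ for $i=1,2$ give, for $N$ sufficiently large, $f'(N)\asymp f(N)/N$ and, when $\alpha_2>0$, $f''(x)\asymp f(x)/x^2$. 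When $\alpha_2=0$ we use instead $xf''(x)=f'(x)s(x)t(x)$ with $s^{-1}(x)\ll_\epsilon x^\epsilon$, which gives $f''(x)\gg f(x)/x^{2+\epsilon}$ for any $\epsilon>0$. Since $x=o(f(x))$ and $f(x)\ll x^{12/11-\epsilon_1}$, we may work with $f(x)=x^{\alpha_1+o(1)}$ in what follows; in particular we may take $\rho\gg (h/q) N^{\alpha_1-2-\epsilon}$ (using the minimum of $F''$ on $[c_0,N+c_0-1]$, which is attained near the right endpoint because $1<\alpha_1<2$) and $|F'(N+c_0-1)-F'(c_0)|\ll (h/q)N^{\alpha_1-1+\epsilon}$.

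Substituting these into Theorem \ref{2.7}, normalizing by $N$, and summing against $1/h$ yields, after a routine estimate, a bound of the form
\begin{equation*}
D_N\!\left(\frac{f(n)}{q}\right) \ll \frac{1}{m} + \frac{m^{1/2}}{q^{1/2}N^{1-\alpha_1/2-\epsilon'}}
\end{equation*}
for some $\epsilon'$ that may be taken arbitrarily small. Balancing the two terms by choosing $m\asymp N^{(2-\alpha_1)/3-\epsilon'}q^{1/3}$ gives $D_N(f(n)/q)\ll N^{-\theta_0}q^{1/3}$ for some positive $\theta_0$ depending only on $\alpha_1$ and $\epsilon'$, hence only on $f$. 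Provided $q\ll N^{\theta_0/2}$ (say), this is $\ll N^{-\theta}$ for any $\theta<\theta_0/2$, proving the proposition.

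The main obstacle is getting an honest lower bound $\rho$ for $F''$ uniformly on $[c_0,N+c_0-1]$ from the asymptotic relations, especially in the delicate case $\alpha_2=0$, where one has no pointwise asymptotic for $f''$ but only the auxiliary control by the slowly varying function $s(x)$. This forces an $x^\epsilon$ loss in the lower bound for $f''$, but since Proposition \ref{LeitmannDisc} only asserts \emph{some} positive $\theta$, the loss is harmless once $\epsilon$ is chosen small relative to $2-\alpha_1$. The rest of the argument is bookkeeping: verifying that the $o(1)$'s in Definition \ref{defleitmanntype} are uniform on $[c_0,N+c_0-1]$ for $N$ large, and that the crude bound $f(x)=x^{\alpha_1+o(1)}$ follows by integrating the asymptotic $xf'(x)=f(x)(\alpha_1+o(1))$.
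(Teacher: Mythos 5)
Your proof follows essentially the same route as the paper's: apply the Erd\H{o}s--Tur\'an inequality (Theorem \ref{ETT}), bound each exponential sum by the second-derivative test (Theorem \ref{2.7}) using the Leitmann asymptotics for $f'$ and $f''$, split into the cases $\alpha_2>0$ and $\alpha_2=0$, and balance the truncation parameter $m$ against the exponential-sum contribution. The only substantive difference is bookkeeping: you re-express everything through $f(x)=x^{\alpha_1+o(1)}$ and balance abstractly in terms of $\alpha_1$, whereas the paper keeps $f(N)$ explicit and uses only the two-sided bound $N\ll f(N)\ll N^{12/11-\epsilon_1}$ to fix $\kappa=1/11$ and obtain the explicit value $\theta=1/11$; since the proposition only asserts existence of a positive $\theta$, your looser route is adequate. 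A few small inaccuracies to flag: after balancing, the power of $q$ should be $q^{-1/3}$ rather than $q^{1/3}$ (fortuitously this is stronger, so nothing breaks); the inequality $1<\alpha_1$ is not guaranteed (only $\alpha_1\geq 1$, e.g.\ $f(x)=x\log x$ has $\alpha_1=1$), though this does not affect the argument since all that matters is $\alpha_1<2$; and the claim that the infimum of $F''$ is ``attained near the right endpoint'' should really be phrased as a lower bound $\rho_N\gg f(N)/N^2$ (or $\gg s(N)f(N)/N^2$ when $\alpha_2=0$), obtained by combining the asymptotic for large $x$ with positivity and continuity of $f''$ on the initial compact piece, which is how the paper implicitly proceeds.
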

\begin{proof}
We will show the result when $\theta=1/11$. By Theorem  \ref{ETT},
%Insert ref.
 we have, for any positive integer $m$, the bound
$$
D_N\left(\frac{f(n)}{q}\right)\ll\frac{1}{m}+\sum_{h=1}^m \frac{1}{hN}\left\lvert \sum_{n=1}^{N}e^{2\pi ih f(n+c_0-1)/q}\right\rvert\,.
$$
We have two cases, depending on whether $\alpha_2=0$ or $\alpha_2>0$. 

Suppose first that $\alpha_2$ is positive. Let $\rho_N=\inf_{x\in[c_0,N+c_0]} f''(n)$. By Theorem \ref{2.7} we have the bound 
\begin{equation}\label{ET}
\left\lvert \sum_{n=1}^{N}e^{2\pi ih f(n+c_0-1)/q}\right\rvert \leq \left(\frac{h}{q}\left\lvert f'(N+c_0-1)-f'(c_0)\right\rvert+2\right)\left(\frac{4}{\sqrt{h\rho_N/q}}+3\right)\,.
\end{equation}
Now, we have $f''(x)\sim \frac{\alpha_1 \alpha_2 f(x)}{x^2}=o(1)$ and $f''(x)>0$, so we also have $\rho_N\sim \frac{\alpha_1\alpha_2 f(N)}{N^2}$. In addition, we have $f'(x)\sim \alpha_1 f(x)/x$. It follows that
\begin{align*}
\left\lvert \sum_{n=1}^{N}e^{2\pi ih f(n+c_0-1)/q}\right\rvert &\ll \left(\frac{hf(N)}{qN} +1\right)\left(\frac{N\sqrt{q}}{\sqrt{h f(N)}}+1\right)\\
&\ll \frac{hf(N)}{qN}+\frac{N\sqrt{q}}{\sqrt{hf(N)}}+\sqrt{\frac{hf(N)}{q}}+1\,,
\end{align*}
where the implied constant only depends on $f$. We insert this expression into (\ref{ET}) with $m=N^\kappa$. We have
$$
D_N\left(\frac{f(n)}{q}\right)\ll N^{-\kappa}+\frac{N^\kappa f(N)}{qN^2}+\frac{\sqrt{q}}{f(N)N^{\kappa/2}}+\frac{N^{\kappa/2}\sqrt{f(N)}}{N\sqrt{q}}+\frac{\log N}{N}\,.
$$
Since $N\ll f(N) \ll N^{12/11-\epsilon}$ for some positive $\epsilon$, we set $\kappa=1/11$, at which point we deduce the following:
$$
D_N\left(\frac{f(n)}{q}\right)\ll N^{-1/11}+\frac{\sqrt{q}}{N^{23/22}}\,.
$$
The result follows with $\theta=1/11$.

Now suppose $\alpha_2=0$. Then we have $x f''(x)=f'(x) s(x) t(x)$, where $s(x)=o(1)$ is a decreasing function, $s(x)\gg x^{-\epsilon}$ for all positive $\epsilon$  and $1\ll t(x)\ll 1$. Then, in particular, we have $\rho_N\gg \frac{s(N)f(N)}{N^2}$, so (\ref{ET}) simplifies to the following bound.
$$
\left\lvert \sum_{n=1}^{N}e^{2\pi ih f(n+c_0-1)/q}\right\rvert \ll \frac{h f(N)}{qN}+\frac{N\sqrt{q}}{\sqrt{h s(n)f(N)}}+\sqrt{\frac{hf(N)}{qs(N)}}+1\,.
$$
Since $1/s(N)=o(N^\epsilon)$ for all positive $\epsilon$ the same argument as above produces the same result, that
$$
D_N\left(\frac{f(n)}{q}\right)\ll N^{-1/11}+\frac{\sqrt{q}}{N^{23/22}}\,.
$$
The result follows.
\end{proof}

\begin{corollary}\label{LeitmannHyp13}
Parts \ref{hyp1} and \ref{hyp3} of Hypothesis \ref{Hypothesis1} hold for our choice of $\mathcal{A}, \mathcal{P}$, some positive choice of $\theta$, and any set of linear forms $\mathcal{L}=\{L_1,\ldots,L_k\}$ with $L_i(n)=n+l_i$.
\end{corollary}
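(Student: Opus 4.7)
The plan is to mirror the treatment of the Beatty case: translate the congruence condition $\lfloor f(n) \rfloor \equiv a \pmod{q}$ into a fractional-part condition on $f(n)/q$, and then invoke the discrepancy bound of Proposition \ref{LeitmannDisc} in place of \eqref{BeattyUniformDistribution}.

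First I would observe that $\lfloor f(n) \rfloor \equiv a \pmod{q}$ if and only if $\{f(n)/q\} \in [a/q, (a+1)/q)$, an interval of length $1/q$. Writing $\#\mathcal{A}(x;q,a)$ as the difference of the counts of $n \le \lfloor g(2x) \rfloor$ and $n \le \lfloor g(x) \rfloor$ for which $\{f(n)/q\}$ lies in this interval, and applying Proposition \ref{LeitmannDisc} with $N \asymp g(2x)$, I would obtain an estimate of the form
$$
\#\mathcal{A}(x;q,a) \;=\; \frac{\#\mathcal{A}(x)}{q} + O\bigl(g(2x)^{1-\theta_0}\bigr),
$$
uniformly for $q \ll g(2x)^{\theta_0}$, where $\theta_0 = 1/11$ is the exponent furnished by the proposition. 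Since a Leitmann function satisfies $f(y) \ll y^{12/11 - \epsilon_1}$ by Definition \ref{defleitmanntype}, its inverse $g$ satisfies $g(y) \gg y^{11/12 + \epsilon'}$ for some $\epsilon' > 0$, so this range of validity is implied by $q \le x^{\theta_1}$ for a suitable $\theta_1 > 0$ depending only on $f$.

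Part \ref{hyp3} then follows immediately: for any $\theta < \theta_1$ and any $q < x^\theta$, the main term dominates the error, so $\#\mathcal{A}(x;q,a) \ll \#\mathcal{A}(x)/q$. For Part \ref{hyp1}, summing the error estimate over $q \le x^\theta$ yields
$$
\sum_{q \le x^\theta} \max_a \Bigl| \#\mathcal{A}(x;q,a) - \frac{\#\mathcal{A}(x)}{q} \Bigr| \;\ll\; x^\theta \cdot g(2x)^{1-\theta_0},
$$
and for $\theta$ chosen sufficiently small in terms of $\theta_0$ and $\epsilon'$, this saves a positive power of $x$ over $\#\mathcal{A}(x) \asymp g(2x)$, which is far stronger than the logarithmic-power saving required.

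The step I expect to be the main chore rather than a conceptual obstacle is the bookkeeping of exponents: one has to track $\theta_0$ from Proposition \ref{LeitmannDisc} together with the inverse-function exponent coming from $f(y) \ll y^{12/11-\epsilon_1}$, and then pick a single positive $\theta$ that works uniformly for both parts. Beyond this routine verification, no new analytic input is required.
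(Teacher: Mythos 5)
Your argument is correct and is exactly the one the paper intends: the paper's proof of this corollary consists of a single sentence invoking "the same argument as in the proof of Theorem \ref{BeattyHyp}," and what you have written out (translating $\lfloor f(n)\rfloor \equiv a \pmod q$ into $\{f(n)/q\}\in[a/q,(a+1)/q)$, applying Proposition \ref{LeitmannDisc} with $N \asymp g(2x)$, and using $g(y)\gg y^{11/12+\epsilon'}$ to convert the range condition $q\ll N^{\theta_0}$ into a condition $q\le x^{\theta_1}$) is precisely the Beatty argument transplanted to this setting. The only difference is a matter of bookkeeping: the paper claims any $\theta < 1/22$ (half the exponent from Proposition \ref{LeitmannDisc}), while your accounting yields roughly $\theta < 1/12$; since the corollary only asserts \emph{some} positive $\theta$, both are fine.
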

\begin{proof}
Following the same argument as in the proof of Theorem \ref{BeattyHyp}, Proposition \ref{LeitmannDisc} implies Parts \ref{hyp1} and \ref{hyp3} of Hypothesis \ref{Hypothesis1} whenever $\theta$ is less than half the $\theta$ of Proposition \ref{LeitmannDisc}; in particular, this result follows for all $\theta<1/22$.
\end{proof}

Between Lemma \ref{LeitmannHyp2} and Corollary \ref{LeitmannHyp13} we have that $\mathcal{A}$ and $\mathcal{P}$  satisfy Hypothesis \ref{Hypothesis1} for some positive choice of $\theta$ and any admissible $\mathcal{L}$. It remains to use this result to prove Theorem \ref{LeitmannBound}. 
% state what sets are 
\begin{proof}[Proof of Theorem \ref{LeitmannBound}]
Let $f:[c_0,\infty)\rightarrow \mathbb{R}^+$ be a Leitmann function. %and as before, let $\mathcal{A}=\{\lfloor f(n)\rfloor : n\in [c_0,\infty)\cap \mathbb{Z}\}$, let $\mathcal P$ be the set of primes, and choose $\theta$ such that $\mathcal{A}, \mathcal{P},\mathcal{L}, \theta$ satisfies Hypothesis 1 for all admissible $\mathcal{L}=\{L_1(n),\ldots,L_k(n)\}$ with $L_i(n)=n+l_i$.  

We first find $\delta$ such that 
\begin{equation}\label{3.1DeltaHyp}
\frac{1}{k} \sum_{L\in\mathcal{L}} \#\mathcal{P}_{L,\mathcal{A}}(x) \geq \delta \frac{\#\mathcal{A}(x)}{\log x}
\end{equation}
 for all $\mathcal{L}$ for $x$ sufficiently large. But by part \ref{thmleitmannbound1} of Theorem \ref{leitmannthm}, we have  $\#\mathcal{P}_{L, \mathcal{A}}(x) \sim \frac{\#\mathcal{A}(x)}{\log x}$. 
%We wish to find $\delta$ such that $\delta \le (\frac{1}{k}\frac{\phi(B)}{B} \sum_{L\in\mathcal{L}} \#\mathcal{P}_{L,\mathcal{A}}(x))(\frac{\log x}{\#\mathcal{A}(x)})$. 
As such, for any $0<\delta<1$, (\ref{3.1DeltaHyp}) holds for all sufficiently large $x$, so we can take $\delta=1-\epsilon$ with $\epsilon>0$ in (\ref{3.1DeltaHyp}). Then, by Theorem \ref{thmmaynard}, we have
\begin{equation}
\#\{n \in \mathcal{A}(x) : \#(\{L_1(n), \ldots, L_k(n)\} \cap \mathcal{P}) \ge C^{-1} \delta \log k \} \gg \frac{\#\mathcal{A}(x)}{(\log x)^k \exp(Ck)}\,,
\end{equation}
where $C$ only depends on $\theta$. 
\end{proof}
\section*{Acknowledgments}
We would like to thank Jesse Thorner for his patient guidance throughout the process, and Ken Ono for his advice and encouragement. This paper was written while all the authors were participants in the 2014 Emory Math REU, and as such we would like to thank the NSF for its support and the Emory Department of Mathematics and Computer Science for its hospitality.
\bibliographystyle{plain}
\bibliography{CItes}
%\begin{thebibliography}{7}
%\bibitem{BS09} W. D. Banks, I. E. Shparlinski, Prime numbesr with Beatty sequences. Colloq. Math, 115(2):147-157, 2009.
%\bibitem{G} A. Granville, Primes in intervals of bounded length. http://www.dms.umontreal.ca/~andrew/CEBBrochureFinal.pdf
%\bibitem{Khi26} A. Khintchine, Zur metrischen Theorie der diophantischen Approximationen. Math. Z., 24(1):706-714, 1926.
%\bibitem{KN12} L. Kuipers and H. Niederreiter, Uniform distribution of sequences. Courier Dover Publications, 2012.
%\bibitem{L} D. Leitmann, The distribution of prime numbers in sequences of the form $\lfloor f(n) \rfloor$, Proc. London Math. Soc. (3) 35 (1977) 448 - 462.
%\bibitem{May13} J. Maynard, Small gaps between primes, arXiv:1311.4600 (2013) 1-25.
%\bibitem{M2} J. Maynard, Dense clusters of primes in subsets, arXiv:1405.2593 (2014) 1 - 35.
%\end{thebibliography}

\end{document}